\let\cite=\citet
\pgfplotsset{compat=1.18}
\pgfmathsetmacro{\A}{7.3}
\pgfmathsetmacro{\taunot}{0.1117}
\pgfmathsetmacro{\B}{3.9}
\pgfmathsetmacro{\gnot}{0.5}
\pgfmathsetmacro{\cv}{0.000389}
\pgfmathsetmacro{\Tnot}{150}
\pgfmathsetmacro{\DavA}{2.434}
\pgfmathsetmacro{\DavB}{2.367}
\pgfmathsetmacro{\DavC}{1.024}
\pgfmathsetmacro{\DavV}{0.5848}
\pgfmathsetmacro{\DavG}{0.8437}
\pgfmathsetmacro{\DavT}{298.15}
\pgfmathsetmacro{\DavCV}{0.001072}
\pgfmathsetmacro{\DavST}{0.8484}
\pgfmathsetmacro{\pL}{2}
\pgfmathsetmacro{\pR}{5}
\pgfmathsetmacro{\pinfL}{4}
\pgfmathsetmacro{\pinfR}{1}
\pgfmathsetmacro{\rhoL}{1}
\pgfmathsetmacro{\rhoR}{2}
\pgfmathsetmacro{\vL}{1.0}
\pgfmathsetmacro{\vR}{-1.0}
\pgfmathsetmacro{\stiffq}{-700}
\pgfmathsetmacro{\stiffp}{1000}
\pgfmathsetmacro{\stiffg}{1.4}
\newcommand{\myhide}[1]{{}}
\begin{document}

\newcommand{\TheTitle}{%
  Preserving the minimum principle on the entropy for the compressible Euler Equations with general equations of state}
\newcommand{\TheAuthors}{B. Clayton, E.~J. Tovar}

\headers{Preserving the minimum entropy principle}{\TheAuthors}

\title{{\TheTitle}\thanks{Draft version, \today %
    \funding{
      BC acknowledges the support of the Eulerian Applications Project (within the Advanced Simulation and Computing program) at Los Alamos National Laboratory (LANL).
      ET acknowledges the support by the Laboratory Directed Research and Development (LDRD) program under project number 20240555MFR and the Engineering Technology Maturation (ETM) program of Los Alamos National Laboratory.
      ET also acknowledges the support from the U.S. Department of Energy’s Office of Applied Scientific Computing Research (ASCR) and Center for Nonlinear Studies (CNLS) at LANL under the Mark Kac Postdoctoral Fellowship in Applied Mathematics.
      LANL is operated by Triad National Security, LLC, for the National Nuclear Security Administration of the U.S. Department of Energy (Contract No. 89233218CNA000001). The Los Alamos unlimited release number is LA-UR-25-22227.}}}

\author{Bennett Clayton\footnotemark[2]
  \and Eric J. Tovar\footnotemark[1]}

\maketitle

\renewcommand{\thefootnote}{\fnsymbol{footnote}}

\footnotetext[1]{%
  Theoretical Division, Los Alamos National Laboratory, P.O. Box 1663, Los Alamos, NM, 87545, USA.}

\footnotetext[2]{%
  X Computational Physics Division, Los Alamos National Laboratory, P.O. Box 1663, Los Alamos, NM, 87545, USA.}

\renewcommand{\thefootnote}{\arabic{footnote}}

\begin{abstract}
  This paper is concerned with constructing an invariant-domain preserving approximation technique for the compressible Euler equations with general equations of state that preserves the minimum principle on the physical entropy.
  We derive a sufficient wave speed estimate for the Riemann problem under some mild thermodynamic assumptions on the equation of state.
  This minimum principle is guaranteed through the use of discrete auxiliary states which are in the invariant domain when using this new wave speed estimate.
  Finally, we numerically illustrate the proposed methodology.
\end{abstract}

\begin{keywords}
  Euler equations, gas dynamics, equation of state, invariant-domain preserving, negative pressure, minimum principle on the specific entropy, mie-gruneisen eos, davis eos, macaw eos
\end{keywords}

\begin{AMS}
  65M12, 35L50, 35L65, 76M10, 76N15, 35Q31
\end{AMS}


\section{Introduction}
\label{sec:introduction}
The compressible Euler equations are used in simulating a variety of applications from the interaction of supersonic flow around obstacles to full multi-physics applications such as the detonation of high-explosives.
As more complexity is added to the application of interest, the equation of state (EOS) used to complete the system becomes more involved.
Further issues can arise when the material of interest experiences tension; that is, it exhibits states with negative pressure and potentially negative specific internal energy.
This is often the case when the equation of state is used to describe properties of condensed phases such as solids or liquids (see:~\citep{lozanoSimpleMACAW, lozano2023analytic}).
As positivity of pressure and specific internal energy may be irrelevant depending on the equation of state, the thermodynamically relevant admissible set for the Euler equations requires positivity of material temperature (and density).
It can be shown that preservation of the minimum principle on the specific entropy implies positivity of temperature.
Thus, a numerical method which preserves this minimum principle will yield physically relevant solutions.

It was shown in \citet{tadmor1986minimum} that entropy solutions to the Euler equations (with polytropic EOS) satisfy a local minimum principle on the specific entropy.
Minimum entropy principle preserving numerical methods have been developed for the ideal gas equation of state over the past few decades (see: \citep{khobalatte1994maximum, coquel1998relaxation, frid2001maps, zhang2012minimum, lv2015entropy, guermond2016}).
In general, methods which preserve a discrete entropy inequality for every mathematical entropy, can be shown to also preserve the minimum principle on the specific entropy; an example of this is seen for the Lax-Friedrichs scheme in \cite{kroner2008minimum}.

It may be possible to preserve the minimum principle on the entropy for an analytic equation of state if one solves the Riemann problem exactly in conjunction with the Godunov method.
However, exact Riemann solvers are very computationally expensive.
More details on this process can be found in \cite[Sec. 1]{colella1985efficient}, \cite{quartapelle2003solution}, or \cite{ivings1998riemann}.
Furthermore, certain approximate Riemann solvers, in particular the HLLC method in \citet{toro1994restoration} (among other methods) can be made to preserve the minimum principle on the specific entropy following the methodology in~\cite{berthon2012local}.
As is the case in any approximate Riemann solver, knowledge of some satisfactory wave speed estimate is required and it is not shown in~\citep{berthon2012local} how to find this wave speed estimate for a minimum principle preserving method.
A sufficient estimation of the maximum wave speed is an essential ingredient and is at the heart of these methods.
The premise of this paper is to provide such an estimation of the wave speed.
To the best of our knowledge, an efficient and robust method for provably preserving the minimum principle on the specific entropy for general equations of state has been absent.
While not minimum principle preserving, some recent work addressing arbitrary EOS can be seen in~\citep{sirianni2024explicit, aiello2025entropy, orlando2025asymptotic}.
Two other recent approaches construct ``alternative'' Riemann problems which approximate the original Riemann problem, see~\cite{koroleva2024approximate} and~\citet{wang2023stiffened}.
This latter approach is in somewhat the same vein as the method outlined in this paper.

The starting point for this work is based on~\citep{clayton_2022,guermond2023} wherein an interpolatory pressure is defined and used to estimate a wave speed in a local Riemann problem.
This method was originally inspired by~\cite{abgrall2001computations}.
A limitation of~\citep{clayton_2022, guermond2023} is that pressure and specific internal energy must be positive.
Furthermore, these constraints may not be physically relevant depending on the equation of state.
As previously stated, the admissible set of interest requires only positivity of the density and temperature.
The main contribution of this work is the derivation of a sufficient wave speed estimate in the local Riemann problem under a thermodynamic constraint based on the fundamental derivative.
This constraint is necessary for identifying when the minimum principle on the entropy can be preserved and is used in the proof of Lemma~\ref{lem:expansion_wave}.
This wave speed estimate is found by interpolating the pressure and the sound speed with a simpler equation of state which allows for an analytic derivation.
We then show that one can construct auxiliary states based on this wave speed estimate which preserve the minimum principle on the specific entropy.
Furthermore, we show that any approximation technique that is a convex combination of the discrete analog of the auxiliary states will satisfy the constraints in the admissible set (\ie is invariant-domain preserving).
The work here will lay the foundation for developing a higher-order invariant-domain approximation technique for the Euler equations with general equations of state.

This paper is organized as follows.
In Section~\ref{sec:model}, we introduce the compressible Euler equations, the necessary thermodynamic constraints and the notion of an admissible set.
We then recall the Riemann problem in Section~\ref{sec:riemann_problem}. Then, in Section~\ref{sec:approx_details}, we introduce a generic approximation technique that is a convex combination of some auxiliary states.
The extended Riemann problem is introduced in Section~\ref{sec:extended_riemann}. The main results of the paper are given in Lemma~\ref{lem:invariant_domain} and Theorem~\ref{thm:main_theorem}.
In Section~\ref{sec:eos_details}, we give details for specific equations of state that satisfy the necessary thermodynamic constraints.
We numerically illustrate the proposed methodology in Section~\ref{sec:numerical_results} with several one-dimensional and two-dimensional problems. We finish with a brief conclusion.
\section{The model} \label{sec:model}
In this section, we formulate the model problem and introduce notation. We also give a brief discussion on equations of state, thermodynamics and admissible sets for the compressible Euler equations.
\subsection{The compressible Euler equations}
\label{sec:euler-equations}
Consider a compressible, inviscid fluid occupying a bounded, polyhedral domain $D$ in $\polR^d$ where $d\in\{1,2,3\}$ is the spatial dimension. We assume that no external forces are acting on the fluid and can thus be modeled by the compressible Euler equations. Given some initial data $\bu_0(\bx)\eqq(\rho_0, \bbm_0, E_0)(\bx)$ over $D$ describing the fluid at the initial time, $t_0$, we look for $\bu(\bx,t)\eqq(\rho, \bbm, E)(\bx,t)$ solving the compressible Euler equations (in the weak sense):
\begin{subequations}\label{euler}
    \begin{align}
         & \partial_t \rho + \DIV\bbm = 0                                                  & \text{a.e. }t>t_0,\,\bx\in D, \label{eq:mass_cons}     \\
         & \partial_t \bbm + \DIV\left(\bv \otimes \bbm + \sfp(\bu)\polI_d\right) = \bm{0} & \text{a.e. }t>t_0,\,\bx\in D, \label{eq:momentum_cons} \\
         & \partial_t E + \DIV\left(\bv\big(E + \sfp(\bu)\big) \right) = 0                 & \text{a.e. }t>t_0,\,\bx\in D. \label{eq:energy_cons}
    \end{align}
\end{subequations}
Here, the components of the unknown variable $\bu \eqq (\rho, \bbm, E)\T$ are the density, $\rho$, the momentum vector, $\bbm$, and the total mechanical energy, $E$.
Note that $\bu$ is considered to be a column vector.
The velocity vector is defined by $\bv(\bu)\eqq\rho^{-1}\bbm$.
We define the specific volume of the fluid as $\tau(\bu) \eqq\rho^{-1}$. For the sake of brevity, we will drop the dependence of $\bu$ when talking about the specific volume.
We also introduce the short-hand notion for the flux of the system: $\polR^{d+2} \ni \bu \mapsto \polf(\bu)\eqq (\bbm, \bv(\bu)\otimes\bbm + \sfp(\bu)\polI_d,\bv(\bu)(E + \sfp(\bu)))\T \in \polR^{(d + 2)\times d}$ where $\polI_d$ is the $d\times d$ identity matrix.
We also introduce the specific internal energy of the fluid $\sfe(\bu)\eqq\rho^{-1}E - \tfrac12\|\bv(\bu)\|^2_{\ell^2}$; the quantity $\rho \sfe(\bu)$ is referred to as the internal energy.
The quantity $\calA\ni\sfp \mapsto \polR$ is the pressure defined by the equation of state (EOS) of interest where $\calA\subset\polR^{d+2}$ is some yet-to-be-defined admissible set in phase space (see \S\ref{sec:invariant_domain}). Throughout the paper, we will refer to $\sfp(\bu)$ as the ``oracle'' pressure.
\begin{remark}[Notation]
    It is common in the literature to be loose with notation regarding the equation of state.  More specifically,
    an equation of state can be defined by $p = p(\tau, s)$ where $s = s(\tau, e)$ is the specific entropy and $e = e(\tau, s)$ is  the specific internal energy.
    Similarly, one can define the equation of state by only using the conserved variable as an argument: $p = p(\bu)$. The reader is often left to deduce arguments based on context.
    While somewhat clunky, we choose to avoid this by defining unique symbols based on the arguments of the function. That is, $p(\tau, s)$ and $\sfp(\bu)$ will both represent the pressure and the arguments are clearly understood. This applies to the specific internal energy (\eg $e(\tau, s)$ and $\sfe(\bu)$) among other thermodynamic quantities.
\end{remark}
\subsection{The equation of state and thermodynamics} \label{sec:eos}
In this work, we assume that the underlying equation of state is complete; that is to say, the specific internal energy can be written as a function of specific volume and specific entropy: $e = e(\tau, s)$.
The other thermodynamic states such as pressure and temperature are obtained via: $p(\tau, s)\eqq -\partial_\tau e(\tau, s)$ and $T(\tau, s)\eqq \partial_s e(\tau, s)$, respectively, through the fundamental identity: $\diff e = -p \diff\tau + T \diff s$.
We define the isentropic bulk modulus by, $K \eqq -\tau \partial_\tau p(\tau,s)$, and the inverse of the specific heat capacity at constant volume, $c_v^{-1} \eqq \frac{1}{T} \partial_s T(\tau, s)$.
Note that if $\partial_\tau p(\tau, s) < 0$ and $\partial_s T(\tau, s) > 0$, then we can invert for $\tau$ and $s$ respectively; that is, $\tau = \tau(p, s)$ and $s = \cals(\tau, T)$.
Assuming this, we are now able to define the isothermal bulk modulus, $K_T \eqq -\tau \partial_\tau p(\tau, \cals(\tau, T))$, and the inverse of the specific heat capacity at constant pressure, $c_p^{-1} \eqq \frac{1}{T} \partial_s T(\tau(p,s), s)$.
These thermodynamic derivatives are valuable for analyzing properties and behavior of the equation of state and allows us to frame the joint convexity of $e(\tau, s)$ in the sense of~\citet[Eq.~2.33]{menikoff1989riemann}.
\begin{definition}[Convexity of $e(\tau,s)$]
    The specific internal energy, $e(\tau,s)$, is convex if the following is satisfied:
    \begin{equation}
        c_v^{-1} \geq c_p^{-1} > 0, \quad \text{ and } \quad K \geq K_T > 0.
    \end{equation}
\end{definition}
Note that we are using different notation for the bulk moduli than in \citep{menikoff1989riemann}.
For the purposes of the numerical method and all proofs contained in this paper, we only require positivity of the isentropic bulk modulus, $K = \tau \partial_\tau p(\tau, s) = \tau \partial^2_\tau e(\tau,s)$ since this yields real values for the sound speed: $c(\tau, s) = \sqrt{\tau K} = \sqrt{-\tau^2 \partial_\tau p(\tau,s)}$.
Hence we only require convexity of the \textit{isentropes}, $e_s(\tau)$.

Furthermore, we will require knowledge of the fundamental derivative:
\begin{equation} \label{eq:fundamental_derivative}
    G(\tau, s) \eqq -\frac12 \tau \frac{\partial^2_{\tau} p(\tau,s)}{\partial_\tau p(\tau,s)} = 1 - \frac{\tau}{c}\partial_\tau c(\tau,s),
\end{equation}
which will be used to ensure the invariant-domain preserving properties.
The fundamental derivative can be traced back to \citet{bethe} wherein analysis of $\partial^2_\tau p(\tau,s)$ was performed.
Later on, \citet{thompson1971fundamental}, introduced the unitless term, $G$, referring to it as the ``fundamental derivative''.
This quantity is essential for determining the wave structure in the Riemann problem as noted in~\citet{menikoff1989riemann}.
For more information regarding the fundamental derivative, see \citet{colonna2009computation}.
\begin{remark}[Smoothness properties] \label{rem:differentiability}
    For the purposes of Lemmas~\ref{lem:expansion_wave} and \ref{lem:shock_wave}, we assume that the equation of state is sufficiently smooth so that $K$ is continuous and differentiable a.e.~and that $G \in L^\infty_{\text{loc}}(\Real_+^2)$.
    This is the case for the Davis reactant EOS discussed in Section~\ref{sec:davis}.
    This assumption is generally true for analytic EOS; however, for tabular EOS, this assumption may fail.
    Specific analysis would need to be performed on the tabular EOS in order to determine its properties.
    See the tabular EOS remark in \citet[Sec.~4.7.2]{Menikoff2007};
    \citet{dilts2006consistent} also provides an insightful analysis regarding tabular EOS and a methodology to construct a thermodynamically consistent tabular approximation.
\end{remark}
\begin{assumption}[Thermodynamic assumptions] \label{def:thermo_consistency}
    We assume that the equation of state is complete and that the following assumptions are satisfied for all $s \geq \sfs_0$ where $\sfs_0 \eqq \min_{\bx \in D} (\sfs(\bu_0(\bx)))$:
    \begin{enumerate}
        \item[(i)] the energy isentrope, $e_s(\tau)$, is convex;
        \item[(ii)] the fundamental derivative satisfies: $\essinf G(\tau, s) > 1$.
    \end{enumerate}
\end{assumption}
Notice that condition~\textup{(i)} implies that the sound speed is real and hence the system~\eqref{euler} is always hyperbolic.
From \eqref{eq:fundamental_derivative}, condition~\textup{(ii)} implies that the isentropic speed of sound is always \textit{decreasing} along an isentrope.
While condition (ii) is somewhat restrictive, it is valid for low molecular complexity (LMC) fluids as described in \citet{guardone2024nonideal}.
\begin{definition}[3rd law of thermodynamics] \label{rem:third_law}
    The 3rd law of thermodynamics as stated by \cite{planck2013treatise}, tells us that the $T = 0$ isotherm corresponds to the $s=0$ isentrope.
    That is,
    \begin{equation}
        e_{\textup{cold}}(\tau) \eqq
        \lim_{s\to 0^+} e(\tau, s) = \lim_{T \to 0^+} e(\tau, \cals(\tau, T))
    \end{equation}
    This minimum curve is referred to as the ``cold'' curve.
\end{definition}

\begin{remark}[Positivity of temperature]
    We note that for an equation of state that satisfies the 3rd law of thermodynamics, positivity of temperature is equivalent to staying above the energy cold curve. We illustrate this fact in Figure~\ref{fig:macaw} with the Simple MACAW equation of state discussed in \S\ref{sec:simple_macaw}. We plot the cold curve (\ie the $T = 0$ isotherm) with a solid black line as well as two non-zero isotherms. We see that as we increase temperature, we move away from the cold curve.
\end{remark}

\begin{remark}[Ideal gas law]
    It should be noted that the 3rd law is not satisfied by every EOS.
    Most notably, the ideal gas law violates this law.
    Recall that the specific internal energy is given by, $e(\tau, s) = \tau^{-(\gamma-1)} \exp\big( \frac{s - s_0}{c_v} \big) = c_v T$.
    Taking the limit as $T \to 0^+$, the cold isotherm corresponds to the constant curve $e = 0$.
    However, for the 3rd law to be satisfied, this requires $s \to -\infty$.
\end{remark}

\begin{figure}
    \centering
    \begin{tikzpicture}
        \begin{axis}[
                width=10cm, height=7cm,
                xlabel={Specific volume, \(\tau\)},
                ylabel={Specific internal energy, \(e\)},
                title={Isotherms for the simple MACAW},
                legend pos=north west,
                legend cell align=left,
                grid=major,
            ]
            \addplot[domain=0.05:1,samples=200,thick,solid,black] {sie(x,0)};
            \addlegendentry{T=0}
            \addplot[domain=0.05:1,samples=200,thick,dashed,blue] {sie(x,2000)};
            \addlegendentry{T=2000}
            \addplot[domain=0.05:1,samples=200,thick,dotted,red] {sie(x,10000)};
            \addlegendentry{T=10000}
        \end{axis}
    \end{tikzpicture}
    \caption{Plots of the isotherms for the simple MACAW equation of state \citep{lozanoSimpleMACAW} using parameters given in Table~\ref{tab:macaw-parameters}.
        The $T=0$ isotherm is the cold curve since the simple MACAW EOS satisfies the 3rd law of thermodynamics.}\label{fig:macaw}
\end{figure}

\subsection{Admissible sets and invariant domains}\label{sec:invariant_domain}
We now supplement the system~\eqref{euler} with an admissible set in phase space. This is useful for constructing approximation techniques that yield physically relevant solutions.
The global admissible set for an equation of state which satisfies the third law of thermodynamics (Remark~\ref{rem:third_law}) is defined by,
\begin{equation} \label{eq:global_admissible_set}
    \calB \eqq \{ \bu \in \Real^{d+2} : \rho > 0, \, \rho\sfe(\bu) > \rho e_{\text{cold}}(\rho^{-1}) \},
\end{equation}
We further define an invariant region based on some initial minimum entropy, $s_0 > 0$:
\begin{equation}\label{eq:admissible}
    \calA_{s_0} \eqq \{ \bu \in \Real^{d+2} : \rho > 0, \, \rho\sfe(\bu) \geq \rho e_{s_{0}}(\rho^{-1}) \}.
\end{equation}
We see that $\calA_{s_0} \subset \calB$ since $T = \partial_s e(\tau, s) > 0$.
That is, $T > 0$ is equivalent to $e_{s_0}(\tau) > e_{\text{cold}}(\tau)$.
By Proposition~\ref{prop:concave_constraint}, the set~\eqref{eq:admissible} is composed of constraints that are at least quasiconcave, which it can then be characterized as an \textit{invariant region} in the sense of~\cite{chueh1977positively}.
Note, we do not characterize the admissible set as $T(\bu) > 0$, since $T(\bu)$ may not be a quasiconcave functional.
We refer to the set~\eqref{eq:admissible}, as either the admissible set, invariant region or invariant set.
We also write $\calA$ instead of $\calA_{s_0}$ for simplicity.
The goal of this work is to construct an approximation techniques that is invariant to~\eqref{eq:admissible}.

\begin{proposition}[Concave constraint] \label{prop:concave_constraint}
    Under Assumption~\ref{def:thermo_consistency} (i), the functional $\rho \sfe(\bu) - \rho e_{s_0}(\tau(\bu))$ is a concave function of $\bu$.
\end{proposition}

\begin{proof}
    It is well know that $\rho \sfe(\bu)$ is a concave function of $\bu$.
    Therefore, we only need show that $\rho e_{s_0}(\tau(\bu))$ is convex.
    Differentiating twice with respect to $\rho$, we have,
    \begin{equation*}
        \partial^2_\rho (\rho e_{s_0}(\tau)) = \frac{1}{\rho^3} \partial^2_\tau e_{s_0}(\tau) > 0.
    \end{equation*}
    Therefore, $\rho\sfe(\bu) - \rho e_{s_0}(\tau(\bu))$ is concave.
\end{proof}

\begin{remark}[Specific internal energy]
    Note that the condition $\sfe(\bu) > e_{s_0}(\tau)$ does not necessarily imply that the specific internal energy must be positive or even that $\sfe(\bu) > 0$ is satisfactory.
    The physical constraint is positivity of the temperature, $T > 0$.
    For certain equations of state, the reference isentrope $e_{s_0}(\tau)$ can be negative on a particular interval for $\tau$.
    A simple example is when the equation of state is the stiffened gas law: $e(\tau,s) = q + p_\infty \tau + C_0(s) \tau^{-(\gamma-1)}$ where $C_0(s)$ is a \textit{positive} constant depending on the exponential of $s$.
    Here, $q\in\Real$ is a reference specific internal energy and $p_\infty \geq 0$ a reference pressure.
    The temperature is given by $T = \frac{e - q - p_\infty \tau}{c_v}$, hence the positivity of temperature is guaranteed when $e > q + p_\infty\tau$.
    Therefore, for $q < 0$, the specific internal energy can be negative while still maintaining $T > 0$.
    Plots of the zero isotherm and several isentropes are provided in Figure~\ref{fig:stiff_gas_isentropes}.
\end{remark}
\begin{figure} \label{fig:stiff_gas_isentropes}
    \centering
    \begin{tikzpicture}
        \begin{axis}[
                width=10cm, height=7cm,
                xlabel={Specific volume, \(\tau\)},
                ylabel={Specific internal energy, \(e\)},
                title={Isentropes for the stiffened gas EOS},
                legend pos=north west,
                legend cell align=left,
                grid=major,
            ]
            \addplot[domain=0.005:1,samples=200,thick,solid,black] {stiff_isen(x,0)};
            \addlegendentry{$C_0=0$}
            \addplot[domain=0.005:1,samples=200,thick,dashed,blue] {stiff_isen(x,30)};
            \addlegendentry{$C_0=30$}
            \addplot[domain=0.005:1,samples=200,thick,dotted,red] {stiff_isen(x,100)};
            \addlegendentry{$C_0=100$}
        \end{axis}
    \end{tikzpicture}
    \caption{Plots of the isentropes for the stiffened gas equation of state with $q = -700$, $p_\infty = 1000$.
        The $T=0$ isotherm occurs at $C_0 = 0$ which occurs when $s \to -\infty$ (3rd law violating).}
\end{figure}
\begin{remark}[Entropy solutions]
    Regarding entropy solutions to the model~\eqref{euler}, it was proven in~\cite{tadmor1986minimum} that
    \begin{equation}
        \sfs(\bu(\bx,t)) \geq \essinf_{\Vert \by \Vert_{\ell^2} \leq R + t v_{\max}} \sfs(\bu_0(\by)), \quad \text{ for } \Vert\bx\Vert_{\ell^2} \leq R,
    \end{equation}
    for a polytropic gas equation of state where $v_{\max} \eqq \sup_{(\bx, s) \in D \times [0,t)} \Vert \bv(\bx, s) \Vert_{\ell^2}$.
    While a result of this nature has yet to be proven for arbitrary convex equations of state, we suspect it to be true provided the equation of state satisfies Assumption~\ref{def:thermo_consistency}.
\end{remark}

%
\section{The Riemann problem}\label{sec:riemann_problem}
The numerical approximation technique considered in this work relies on local wave speed estimates in the Riemann problem. We recall its definition here.
Let $\calA\subset\polR^m$ be a convex admissible set for the solution to the Riemann problem.
Let $\bn\in\polS^{d-1}(\bm{0},1)$ be a unit vector.
Given a pair of states $(\bw_L, \bw_R)\in\calA\times\calA$, the one-dimensional Riemann problem in the direction of $\bn$ is defined by the initial value problem:
\begin{equation}\label{eq:riemann_problem}
    \partial_t\bw + \partial_x(\polg(\bw)\bn) = \bm{0},\quad
    \bw(x, 0) =
    \begin{cases}
        \bw_L & \text{if }x < 0, \\
        \bw_R & \text{if }x > 0,
    \end{cases}
\end{equation}
where $\polg : \Real^m \to \Real^d$ is the flux.
Assume that $\bw(x, t; \bw_L, \bw_R, \bn)$ is a weak solution to \eqref{eq:riemann_problem} and that $\bw(x, t; \bw_L, \bw_R, \bn) \in \calA$ for $\text{a.e. } x \in \Real$. For the sake of simplicity, we will interchange the notation $\bw(x, t; \bw_L, \bw_R, \bn)$ with $\bw(x, t)$.
It is well-known that constructing numerical methods based on Riemann averages is essential for ensuring robustness of numerical solutions (see:~\citep{harten1983upstream, guermond2016, nessyahu1990non}). We define the following auxiliary state:
\begin{equation}\label{eq:auxiliary_state}
    \overline{\bw}(\lambda; \bw_L,\bw_R, \bn) \eqq \frac12(\bw_R + \bw_L) - \frac{1}{2\lambda}(\polg(\bw_R)\bn - \polg(\bw_L) \bn).
\end{equation}
We then have the following classical result:
\begin{lemma}[Riemann average] \label{lem:classic_bar_state_result}
    Let $\lambda_L$ and $\lambda_R$ denote the smallest and largest wave speeds bounding the Riemann fan, respectively.
    If $\hlambda \geq \max(|\lambda_L|, |\lambda_R|)$ and $t\hlambda \leq \frac12$ then
    \begin{equation}\label{eq:riemann_average}
        \overline{\bw}(\hlambda; \bw_L,\bw_R, \bn) = \int_{-\frac12}^{\frac12}\bw(x, t; \bw_L,\bw_R, \bn) \diff x,
    \end{equation}
    is average solution to the Riemann problem
\end{lemma}

\begin{proof}
    The proof is provided in~\citep[Lem.~2.1]{guermond2019}.
\end{proof}

The usefulness of Lemma~\ref{lem:classic_bar_state_result} offers an algebraic means of computing the average of the solution to the Riemann problem (assuming the maximum wave speed is known).
If $\bw \in \calA$ pointwise a.e.~then it immediately follows that $\overline{\bw}(\hlambda) \in \calA$.
For a reference and collection of useful results regarding the auxiliary state $\overline{\bw}$, see:~\citep[Lem.~3.2]{clayton_2022}.

\section{Approximation technique}\label{sec:approx_details}

In this section, we describe the algebraic structure for a generic approximation technique that can be made invariant-domain preserving if the equation of state satisfies Assumption~\ref{def:thermo_consistency}.
Let $t^n$, $n\in\polN$, denote the current discrete time and let $\dt$ denote the time step size. Let $\{\bsfU_i\upn\}_{i\in\calV}$ denote a collection of states used to describe the approximation of the conserved variable $\bu$ at $t^n$. Here, $\calV$ denotes the index set used to enumerate the spatial degrees of freedom. We further assume that for each $i\in\calV$ that $\bsfU_i\upn\in\calA\subset\polR^{d+2}$.

We also define an index set $\calI(i)$ used to enumerate the degrees of freedom that locally interact with $i\in\calV$.
We then introduce the discrete auxiliary states $\overline{\bsfU}^n_{ij}(\lambda_{ij})$ (which are the discrete analog to~\eqref{eq:auxiliary_state}) at every $t^n$ from the underlying spatial discretization.
The goal of the paper is to find $\hlambda_{ij}$ so that $\overline{\bsfU}^n_{ij}(\hlambda_{ij}) \in \calA$ for all $i \in \calV$ and $j \in \calI(i)$.
One does not necessarily require $\hlambda_{ij} \geq \max(\lambda_i, \lambda_j)$ and finding the true max wave speed is not realistically feasible for an arbitrary EOS; we only desire a wave speed which guarantees $\overline{\bsfU}^n_{ij}(\hlambda_{ij}) \in \calA$.

The general numerical approximation we consider in this work is one that can be written in the following convex combination:
\begin{equation} \label{eq:numerical_method}
    \bsfU_i\upnp = \sum_{k\in\calI(i)}\omega_{ik}\upn\bsfU_k\upn + \sum_{j\in\calI(i)\setminus\{i\}}\mu_{ij}\upn \overline{\bsfU}^n_{ij}(\hlambda_{ij}).
\end{equation}
where for each $i\in\calV$ the weights $\omega_{ik}\upn$ and $\mu_{ij}\upn$ satisfy $(\sum_{k\in\calI(i)}\omega_{ik}\upn + \sum_{j\in\calI(i)\setminus\{i\}}\mu_{ij}\upn) = 1$ and $\omega\upn_{ik}, \mu\upn_{ij} \in [0,1]$ for all $k\in\calI(i)$ and $j\in\calI(i)\setminus\{i\}$.
These weights encode the underlying spatial discretization.
\begin{proposition}[Invariant-domain preserving]
    Assume that $\bsfU_i\upn\in\calA$ at time $t\upn$ for all $i\in\calV$.
    Further assume that there exists $\hlambda_{ij} > 0$ such that $\overline{\bsfU}^n_{ij}(\hlambda_{ij})\in\calA$ for each $i\in\calV$ and all $j\in\calI(i)$.
    Then, $\bsfU_i\upnp\in\calA$. That is, the scheme~\eqref{eq:numerical_method} is invariant-domain preserving.
\end{proposition}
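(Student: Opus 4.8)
The plan is to reduce the statement to the convexity of the admissible set $\calA$, together with the observation that \eqref{eq:numerical_method} is, by construction, a convex combination of states already known to lie in $\calA$.

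First I would establish that the set $\calA$ in \eqref{eq:admissible} is convex. Each of its three defining constraints cuts out a convex region: $\tau(\bu)>0$ is equivalent to $\rho>0$, which is linear in $\bu$; the energy constraint $\rho\sfe(\bu)-\rho e_{\sfs_0}(\tau(\bu))\ge 0$ is concave in $\bu$ since $\tau\mapsto e_{\sfs_0}(\tau)$ is convex under Assumption~\ref{def:thermo_consistency}(i) (this is the concavity already noted just below \eqref{eq:admissible}); and the super-level set $\{\sfs(\bu)\ge\sfs_0\}$ is convex for the Euler system (cf.~\citep{tadmor1986minimum, chueh1977positively}). A finite intersection of convex sets is convex, so $\calA$ is convex. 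This is exactly the property already invoked in \S\ref{sec:riemann_problem} to conclude $\overline{\bw}\in\calA$ from $\bw\in\calA$ a.e.\ via Jensen's inequality, so one may simply cite \citep[Lem.~3.2]{clayton_2022} for the convexity of $\calA$.

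Next I would read off from \eqref{eq:numerical_method} that $\bsfU_i\upnp$ is a convex combination of the vectors $\{\bsfU_k\upn\}_{k\in\calI(i)}$ and $\{\overline{\bsfU}^n_{ij}(\lambda)\}_{j\in\calI(i)\setminus\{i\}}$: the coefficients $\omega_{ik}\upn$ and $\mu_{ij}\upn$ are nonnegative and, by the standing assumption on the weights, sum to one. By hypothesis $\bsfU_k\upn\in\calA$ for every $k\in\calV$, and the auxiliary states $\overline{\bsfU}^n_{ij}(\lambda)$ are assumed to lie in $\calA$ as well. Convexity of $\calA$ then immediately gives $\bsfU_i\upnp\in\calA$, which is the claim.

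I do not expect a genuine obstacle here: the content of the proposition resides entirely in its two hypotheses (admissibility of the $\bsfU_i\upn$ and admissibility of the $\overline{\bsfU}^n_{ij}(\lambda)$), and the only point requiring a little care is the convexity of $\calA$, in particular the convexity of the physical-entropy super-level set, which rests on Assumption~\ref{def:thermo_consistency}(i). The substantive work of the paper lies not here but in exhibiting wave speed estimates $\lambda=(\lambda_L,\lambda_R)$ for which the second hypothesis is actually met; that is deferred to Lemma~\ref{lem:invariant_domain} and Theorem~\ref{thm:main_theorem}.
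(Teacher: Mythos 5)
Your proposal is correct and follows exactly the paper's argument: the update \eqref{eq:numerical_method} is a convex combination of states in $\calA$, and $\calA$ is convex, so $\bsfU_i\upnp\in\calA$. The only difference is that you spell out why $\calA$ is convex (which the paper delegates to the remarks following \eqref{eq:admissible} and to the cited references), so there is nothing to add.
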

\begin{proof}
    Since $\bsfU_i\upn, \overline{\bsfU}^n_{ij}(\hlambda_{ij}) \in\calA$ and $\calA$ is convex, the convex combination defined by~\eqref{eq:numerical_method} also belongs to $\calA$.
\end{proof}

The goal of this work is to provide a methodology for computing wave speed estimates, $\{\hlambda_{ij}\}$, that enforces $\overline{\bsfU}^n_{ij}(\hlambda_{ij}) \in \calA$ where $\calA$ is defined in~\eqref{eq:admissible}.
Since it is highly non-trivial, or maybe even impossible, to find the exact maximum wave speed for every equation of state that satisfies Assumption~\eqref{def:thermo_consistency}, we adopt the \textit{extended Riemann problem} methodology introduced in~\cite{clayton_2022} to instead compute $\hlambda_{ij}$ that is computationally more feasible and still provides the desired properties (\ie $\overline{\bsfU}^n_{ij}(\hlambda_{ij}) \in \calA$).

\begin{remark}[Discrete entropy inequality]
    It is not necessarily the case that an invariant-domain preserving method satisfies a discrete entropy inequality.
    That it to say, an IDP method could still converge to the wrong solution.
    This is discussed in \citet[Sec.~5.1]{guermond2016}.
    Finding generic weights $\mu_{ij}\upn$ and $\omega_{ik}\upn$ that satisfy such a discrete entropy inequality is out of the scope of this paper.
\end{remark}
\section{The Extended Riemann problem} \label{sec:extended_riemann}
The extended Riemann problem can be thought of as a simpler, surrogate model that encodes information of the original Riemann problem~\eqref{eq:riemann_problem} through \textit{interpolation} of thermodynamic quantities via a local stiffened gas equation of state (see: Lemma~\ref{lem:interpolation}).
Furthermore, the auxiliary state of the extended Riemann problem \textit{contains} the auxiliary state of the oracle Riemann problem (see: equation~\eqref{eq:extended_bar_states}).
This fact will allow us to analytically derive a maximum wave speed $\tlambda_{ij}^{\max}$ for the extended Riemann problem which will enforce the desired invariant domain properties on the original auxiliary state.
We then prove in Lemma~\ref{lem:invariant_domain} that the auxiliary state of the oracle Riemann problem using $\tlambda^{\max}_{ij}$ (or an upper bound on $\tlambda^{\max}_{ij}$) satisfies the minimum principle on the oracle physical entropy.
\subsection{Set up}
Let $Z\in\{i,j\}$ and $\bsfU_Z\upn\eqq(\rho_Z, \bsfM_Z, \sfE_Z)^{\mathsf{T}}$ be the discrete data at $t^n$.
Let $\bn_{ij}$ denote some unit normal and can be thought of as the discrete counterpart to $\bn$ defined in \S\ref{sec:riemann_problem}.
We set $e_Z\eqq \sfe(\bsfU_Z\upn)$ to be the specific internal energy data. Let $p_Z\eqq\sfp(\bsfU_Z\upn)$ be the oracle pressure data and $K_Z\eqq \sfK(\bsfU_Z\upn)$ the oracle isentropic bulk modulus data. Let $s_{\min} \eqq \min_{Z\in\{L,R\}} \sfs(\bsfU^n_Z)$ be the minimum entropy oracle data.
We further define a local total energy quantity $\calE_Z\eqq \sfE_Z - \frac{\|\bsfM_Z - (\bsfM_Z\SCAL\bn_{ij})\bn_{ij}\|^2_{\ell^2}}{2 \rho_Z}$.
Note also that $\calE_Z = \rho_Z e_Z +  \frac{\|(\bsfM_Z\SCAL\bn_{ij})\bn_{ij}\|^2_{\ell^2}}{2 \rho_Z}$. Although the ``left'' and ``right'' states of interest are the discrete states $(\bsfU_i\upn, \bsfU_j\upn)$, respectively, we introduce the following \textit{continuous} extended Riemann problem for better readability.

Let $\tbu\eqq(\rho, m, \calE, \calP_\infty, \calQ)^{\mathsf{T}}$. Then we consider the following one-dimensional extended Riemann problem:
\begin{subequations} \label{eq:full_extended_RP}
    \begin{align}\label{eq:extended_RP}
         & \partial_t\begin{pmatrix}
                         \rho         \\
                         m            \\
                         \calE        \\
                         \calP_\infty \\
                         \calQ
                     \end{pmatrix} + \partial_x \begin{pmatrix}
                                                    m                                      \\
                                                    \frac{m^2}{\rho} + \pinterp(\tbu)      \\
                                                    \frac{m}{\rho}(\calE + \pinterp(\tbu)) \\
                                                    \frac{m}{\rho} \calP_\infty            \\
                                                    \frac{m}{\rho} \calQ
                                                \end{pmatrix} = 0,  \quad \text{ where }                                                 \\
        %
         & \pinterp(\tbu) = (\gamma - 1) (\calE - \frac{m^2}{2 \rho} - \calQ) - \frac{\gamma \calP_\infty}{\rho},\label{eq:extended_pressure}
    \end{align}
\end{subequations}
with local data $\tbu_Z \eqq (\rho_Z, \bsfM_Z\SCAL\bn_{ij}, \calE_Z, \calP_{\infty, Z}, \calQ_{Z})^{\mathsf{T}}$.
Here, we call $\pinterp(\tbu)$ the interpolant pressure.
The extended quantities $\calP_\infty(x, t)$ and $\calQ(x, t)$ are surrogate scalars transported by the velocity $\frac{m}{\rho}$. We define their respective primitive counterparts by: $\calp_\infty\eqq\frac{\calP_\infty}{\rho}$ and $\calq\eqq\frac{\calQ}{\rho}$.
Then using the primitive variables in the interpolant pressure definition, we see that $\pinterp(\tbu) = (\gamma - 1)(\rho e - \rho \calq) - \gamma \calp_\infty$ which resembles the stiffened gas pressure law.
However, note that here, $\calp_\infty$ and $\calq$ are not the usual constants and instead depend on $(x, t)$.
We set the left/right data of the extended primitive quantities by:
\begin{subequations}\label{eq:local_data}
    \begin{align}
        \gamma           & = 1+\epsilon, \label{eq:gammaz}                                                            \\
        \calp_{\infty,Z} & = \frac{K_{Z}}{\gamma} - p_Z \label{eq:pinfz},                                             \\
        \calq_Z          & = e_Z - \tau_Z \Big( \frac{p_Z + \gamma \calp_{\infty,Z}}{\gamma - 1} \Big), \label{eq:qz}
    \end{align}
\end{subequations}
for some $\epsilon > 0$.
(We will see in the proof of Lemma~\ref{lem:expansion_wave} that $\epsilon$ must satisfy $\epsilon < 2(\essinf_{\tau \geq \tau_Z} G(\tau,s_Z) - 1)$; however, the choice of $\epsilon$ becomes irrelevant by Section~\ref{sec:robust_wave_speed_estimate}.)
The definition of the local data defined above is chosen so that the extended Riemann problem interpolates relevant thermodynamic quantities of the oracle.
We summarize below.
\begin{lemma}[Thermodynamic interpolation] \label{lem:interpolation}
    Let $p_Z$ be the pressure oracle data and $K_Z$ be the isentropic bulk modulus data, for $Z\in\{i,j\}$.
    Let $\pinterp(\tbu)$ be the interpolation pressure defined by \eqref{eq:extended_pressure}, and $\gamma$, $\calp_{\infty,Z}$, and $\calq_Z$ defined by \eqref{eq:local_data}, respectively.
    Let $\widetilde{\calK}(\tbu)\eqq \gamma(\pinterp(\tbu) + \calp_\infty)$ be the interpolatory isentropic bulk modulus.
    Then
    \begin{equation}
        \pinterp(\tbu_Z) = p_Z, \quad \text{ and } \quad
        \calK_Z(\tbu_Z) = K_{Z}.
    \end{equation}
\end{lemma}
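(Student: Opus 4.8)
The plan is an entirely direct verification: the local data in \eqref{eq:local_data} has been reverse-engineered precisely so that the two matching identities in the statement hold, so the proof reduces to substituting into the definitions and simplifying. Before that, I would record one point needed for the quotients to make sense: by Assumption~\ref{def:thermo_consistency}(iv) we have $G(\tau,s)\geq g>1$ on $\{\tau>0,\ s\geq\sfs_0\}$, and since each $\bsfU^n_Z\in\calA$ we have $s_{\min}\geq\sfs_0$; hence the infimum in \eqref{eq:gammaz} is taken over a subset of $\{s\geq\sfs_0\}$, so $\gamma_Z = 2\essinf_{\tau>0,\,s\geq s_{\min}}G(\tau,s)-1\geq 2g-1>1$. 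In particular $\gamma_Z$ and $\gamma_Z-1$ are finite and strictly positive, so the denominators in \eqref{eq:pinfz}--\eqref{eq:qz} and in the primitive form of $\pinterp$ are harmless.

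For the pressure identity, I would use $\rho_Z=\tau_Z^{-1}$ together with the primitive form $\pinterp(\tbu)=(\gamma-1)(\rho e-\rho\calq)-\gamma\calp_\infty$ and evaluate at $\tbu_Z$, obtaining $\pinterp(\tbu_Z)=(\gamma_Z-1)\rho_Z(e_Z-\calq_Z)-\gamma_Z\calp_{\infty,Z}$. Substituting $e_Z-\calq_Z=\tau_Z\tfrac{p_Z+\gamma_Z\calp_{\infty,Z}}{\gamma_Z-1}$ from \eqref{eq:qz}, the factor $(\gamma_Z-1)$ and the product $\rho_Z\tau_Z=1$ both cancel, so $(\gamma_Z-1)\rho_Z(e_Z-\calq_Z)=p_Z+\gamma_Z\calp_{\infty,Z}$, and the $\gamma_Z\calp_{\infty,Z}$ terms cancel, leaving $\pinterp(\tbu_Z)=p_Z$.

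For the bulk-modulus identity, I would simply plug the previous result into the definition $\widetilde{\calK}(\tbu)=\gamma(\pinterp(\tbu)+\calp_\infty)$: evaluating at $\tbu_Z$ gives $\widetilde{\calK}(\tbu_Z)=\gamma_Z(p_Z+\calp_{\infty,Z})$, and \eqref{eq:pinfz} gives $p_Z+\calp_{\infty,Z}=K_Z/\gamma_Z$, whence $\widetilde{\calK}(\tbu_Z)=K_Z$.

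I do not expect any real obstacle: the whole content is the algebraic cancellation above. The only thing worth a sentence of care is that the $\essinf$ in \eqref{eq:gammaz} must be finite and strictly greater than $1$ — which is exactly Assumption~\ref{def:thermo_consistency}(iv) and is what keeps $\gamma_Z$ and $\gamma_Z-1$ away from $0$; if one instead uses the alternative choice of $\gamma_Z$ mentioned in Remark~\ref{rem:local_fundamental}, the same computation goes through verbatim as long as that choice still satisfies $\gamma_Z>1$.
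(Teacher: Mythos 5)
Your proof is correct and follows essentially the same route as the paper's: direct substitution of \eqref{eq:qz} into the primitive (stiffened-gas) form of $\pinterp$ to get $\pinterp(\tbu_Z)=p_Z$, and then \eqref{eq:pinfz} to get the bulk-modulus identity. The extra observation that Assumption~\ref{def:thermo_consistency}(iv) keeps $\gamma_Z>1$, so the denominators in the local data are harmless, is a small but harmless addition not spelled out in the paper.
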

\begin{proof}
    The result is shown through direct computation.
    Note: $\calp(\tbu_Z) = (\gamma - 1) (\rho_Z e_Z - \rho_Z \calq_Z) - \gamma \calp_{\infty, Z}$.
    Substituting~\eqref{eq:qz}, we see that $\calp(\tbu_Z) = p_Z$.
    Similarly, interpolation of the isentropic bulk modulus follows from \eqref{eq:pinfz}.
\end{proof}
Note that interpolation of the (isentropic) bulk modulus is the same as the interpolation of the (isentropic) sound speed since $K_Z = \rho_Z c_Z^2$.
\begin{lemma}[Energy interpolation]
    The specific internal energy for the interpolatory equation, $\widetilde{\cale}(\calp, \rho, \calq, \gamma,  \calp_{\infty}) = \frac{\calp + \gamma p_\infty}{(\gamma - 1) \rho} + \calq$, satisfies,
    \begin{equation}
        \widetilde{\cale}(\calp_Z, \rho_Z, \calq_Z, \gamma, \calp_{\infty,Z}) = e_Z.
    \end{equation}
\end{lemma}
\begin{proof}
    This is an immediate consequence from the interpolation of the pressure, $\pinterp(\tbu_Z) = p_Z$.
\end{proof}
The fact that the specific internal energy and the pressure are interpolated is essential for the proof of the invariant domain preserving properties in Section~\ref{sec:invariant_domain}.

We note that the idea of interpolating a general equation of state with a stiffened gas pressure law can be traced back to \citep[Sec.~VII]{menikoff1989riemann}. A more recent approach that also uses the stiffened gas law for interpolating the equation of state can be seen in~\cite[Sec.~2.1]{wang2023stiffened}.
\subsection{Solution to the extended Riemann problem}
Solving the extended Riemann problem follows the standard methodology as proposed by~\citet{Lax_1957} and was detailed in~\citep{clayton_2022} for an ideal gas interpolatory pressure law with an additional equation for a variable $\gamma$ only.
This idea was originally inspired by \citet{abgrall2001computations}.
Since the technique here is similar to~\citep{clayton_2022}, we skip the full details and instead summarize the results.
It can be shown that the solution to the Riemann problem is composed of two genuinely non-linear waves (either expansions or shocks) and three contact waves for $\rho$, $\calP_\infty$ and $\calQ$.
A consequence of the extended quantities $\calP_\infty$ and $\calQ$ being contact waves is that the interpolant pressure reduces to a simple stiffened gas law on the left of the contact: $\pinterp(\tbu) = (\gamma - 1) (\rho e - \rho \calq_L) - \gamma \calp_{\infty, L}$ and $\pinterp(\tbu) = (\gamma - 1) (\rho e - \rho \calq_R) - \gamma \calp_{\infty, R}$ to the right of the contact.
(Here, we have interchanged $i$ with $L$ and $j$ with $R$ to emphasize \textit{left} and \textit{right}, respectively).
This fact makes it feasible to compute a maximum wave speed in the extended Riemann problem since the extended variables do not influence the left/right genuinely non-linear waves.
We now motivate why working with the extended Riemann problem is sufficient for enforcing $\overline{\bsfU}_{LR}^n(\tlambda_{LR}) \in \calA$.

Since the oracle pressure is interpolated in the extended Riemann problem (as shown in Lemma~\ref{lem:interpolation}), this implies that the local left/right flux of the extended Riemann problem is given by:
\[\widetilde{\polf}(\tbu_Z)\bn = (\polf(\bu_Z)\bn, v_Z \calP_{\infty,Z}, v_Z \calQ_Z)^{\mathsf{T}}.\]
That is to say, the extended flux evaluated at $\tbu_Z$ contains the flux of the oracle Riemann problem evaluated at $\bu_Z$.
Let $\overline{\bu}_{LR}$ denote the auxiliary quantity \eqref{eq:auxiliary_state}.
\begin{equation} \label{eq:extended_bar_states}
    \overline{\tbu}_{LR}(\htlambda_{LR}; \tbu_L, \tbu_R, \bn)
    \eqq \begin{pmatrix}
        \overline{\bu}_{LR}(\htlambda_{LR}; \bu_L, \bu_R, \bn)                                                                   \\
        \frac12(\calP_{\infty,L} + \calP_{\infty,R}) - \frac{1}{2\htlambda_{LR}} (v_R \calP_{\infty, R} - v_L \calP_{\infty, L}) \\
        \frac12(\calQ_L + \calQ_R) - \frac{1}{2\htlambda_{LR}} (v_R \calQ_R - v_L \calQ_L),
    \end{pmatrix},
\end{equation}
where $\htlambda_{LR}$ is an upper bound to the maximum wave speed to the extended Riemann problem.
This shows that the average of the solution to the extended Riemann problem \textit{contains} the auxiliary quantity $\overline{\bu}_{LR}$.
Therefore, the auxiliary state in \eqref{eq:extended_bar_states} satisfies the identity \eqref{eq:riemann_average} (for $t\htlambda_{LR} \leq \frac12$); that is,
\begin{equation} \label{eq:extended_bar_state_average}
    \overline{\tbu}_{LR}(\htlambda_{LR}; \tbu_L, \tbu_R, \bn)
    = \int_{\frac12}^{\frac12} \tbu(x, t; \tbu_L, \tbu_R, \bn) \diff x,
\end{equation}
where $\tbu$ is the solution to the \textit{extended} Riemann problem.
The important result of this methodology is that
\begin{equation}
    \widetilde{\sfe}(\overline{\tbu}_{LR}(\htlambda_{LR})) = \sfe(\overline{\bu}_{LR}(\htlambda_{LR})),
\end{equation}
where $\widetilde{\sfe}(\tbu) = \sfe(\bu)$ is simply the specific internal energy of the extended Riemann problem and it remains unchanged.
Thus, one is able to analyze the much simpler extended Riemann problem to prove results regarding $e$.
This concept was first introduced in \citep{clayton_2022}.
We wish to further emphasize that $\overline{\bu}_{LR}(\htlambda_{LR})$ is the auxiliary state based on the maximum wave speed of the extended Riemann problem and is not necessarily the maximum wave speed to the oracle Riemann problem.
We would like to also emphasize that the discrete auxiliary state is simply $\overline{\bsfU}^n_{LR}(\htlambda_{LR}) = \overline{\bu}_{LR}(\htlambda_{LR})$ derived from the continuous solution.
We now give the maximum wave speed in the extended Riemann problem, but a more robust wave speed estimation is described in Section~\ref{sec:robust_wave_speed_estimate}.

\subsection{The maximum wave speed}\label{sec:max_wave_speed}
Since the interpolatory pressure is just a stiffened gas pressure law left and right of the contact, it can be shown the leftmost and rightmost wave speeds in the extended Riemann problem are defined by:
\begin{subequations}
    \begin{align}
        \lambda_L(\calp^*) & \eqq v_L - \mathcal{c}_L\sqrt{\frac{\gamma + 1}{2\gamma} \max\Big( \frac{\calp^* - p_L}{\calp_{\infty,L} + p_L}, 0 \Big) + 1}, \label{eq:left_wave_speed}   \\
        \lambda_R(\calp^*) & \eqq v_R + \mathcal{c}_R\sqrt{\frac{\gamma + 1}{2\gamma} \max \Big( \frac{\calp^* - p_R}{\calp_{\infty,R} + p_R}, 0 \Big) + 1}, \label{eq:right_wave_speed}
    \end{align}
\end{subequations}
where $\mathcal{c}_Z \eqq \sqrt{\gamma\frac{p_Z + \calp_{\infty,Z}}{\rho_Z}} = \sqrt{\tau_Z K_Z}$ for $Z \in \{L, R\}$ (see:~\cite[Sec.~4.2.1]{clayton2023robust}). Here, $\gamma\eqq 1 + \epsilon$.
The value $\calp^*$ is the root to the following nonlinear equation,
\begin{equation} \label{eq:varphi}
    \varphi(\calp) \eqq f_L(\calp) + f_R(\calp) + v_R - v_L = 0,
\end{equation}
where
\begin{equation}
    f_Z(\calp) = \begin{cases}
        \frac{2\mathcal{c}_Z}{\gamma-1} \Big( \Big( \frac{\calp + \calp_{\infty,Z}}{p_Z + \calp_{\infty,Z}} \Big)^{\frac{\gamma-1}{2\gamma}} - 1 \Big), \quad \text{ if } \calp < p_Z, \\
        (\calp - p_Z) \sqrt{\frac{A_Z}{\calp + \calp_{\infty,Z} + B_Z}}, \quad \text{ if } \calp \geq p_Z,
    \end{cases}
\end{equation}
with $A_Z = \frac{2\tau_Z}{\gamma+1}$ and $B_Z = \frac{\gamma-1}{\gamma+1}(p_Z + p_{\infty,z})$.
Solving for $\calp^*$ requires an iterative method, and an efficient process for solving is seen in \citet{Guermond_fast}.

In the next section we show that the use of the wave speeds \eqref{eq:left_wave_speed} and \eqref{eq:right_wave_speed}, guarantees us that the average of the Riemann solution preserves the minimum principle on the entropy.
The proof of Lemma~\ref{lem:expansion_wave} will require $\epsilon$ to satisfy $\epsilon < 2(\essinf_{\tau \geq \tau_Z} G(\tau,s_Z) - 1)$.
In Section~\ref{sec:robust_wave_speed_estimate}, we circumvent this condition and show that one is able to take the limit as $\epsilon \to 0^+$ which simplifies the wave speed computation.
\subsection{Invariant domain properties}
It was shown in~\citep{clayton_2022} that the average of the extended Riemann problem satisfies the positive density condition. We extend this result and now show that the average of the extended Riemann problem solution will also satisfy the minimum principle for the oracle entropy (provided the oracle equation of state satisfies Assumption~\eqref{def:thermo_consistency}).
This is the main result of the paper.

The proof outline consists of showing that the specific internal energy on each wave of the extended Riemann problem remains on or above the corresponding isentrope.
For this, we deduce that the average of the solution to the extended Riemann problem remains above the minimum isentrope and hence preserves the minimum principle on the specific entropy.

\begin{lemma}[Expansion wave] \label{lem:expansion_wave}
    Let $Z \in \{L, R\}$ and $\gamma = 1 + \epsilon$.
    Assume the $Z$-wave in the extended Riemann problem is an expansion wave.
    If
    \begin{equation} \label{eq:essential_G}
        \essinf_{s \geq s_Z, \tau \geq \tau_Z} G(\tau,s) > 1,
    \end{equation}
    then there exists an $\epsilon_0 > 0$ such that for all $\epsilon \in (0,\epsilon_0)$, the isentrope of the interpolant stiffened gas EOS is above the isentrope of the oracle which intersects at $(\tau_Z, e_Z)$.
    That is,
    \begin{equation}
        e^{\textup{stiff}}_s(\tau(\xi)) \geq e^{\textup{oracle}}_s(\tau(\xi))
    \end{equation}
    where $\tau(\xi)$ (with $\xi \eqq x/t$) is the self-similar solution of the specific volume on the expansion wave.
\end{lemma}
\begin{proof}
    Throughout this proof, we use the subscript, ${}_s$, to denote a thermodynamic quantity on its respective isentrope.
    We start by first noting that $e^{\text{stiff}}_s(\tau_Z) = e^{\text{oracle}}_s(\tau_Z)$; this follows from \eqref{eq:qz}.
    However, this does not imply that the two entropies are the same.
    On an expansion wave, the specific volume is increasing; that is, $\tau(\xi) \in [\tau_Z, \infty)$.
    Since we don't know the exact value that $\tau(\xi)$ approaches to on the expansion wave, the result will be proven for the full range $\tau(\xi) \in [\tau_L, \infty)$.
    To simplify notation, we drop the dependency on $\xi$.

    We would like to emphasize again that specific entropy increases with respect to specific internal energy.
    Hence if $e^{\text{stiff}}_s(\tau) \geq e^{\text{oracle}}_s(\tau)$, then the energy belongs to an isentrope with higher specific entropy.  Figure~\ref{proof:isentrop_comparison} provides a graphical interpretation of this idea.

    \begin{figure}
        \centering
        \includegraphics[width=0.5\linewidth]{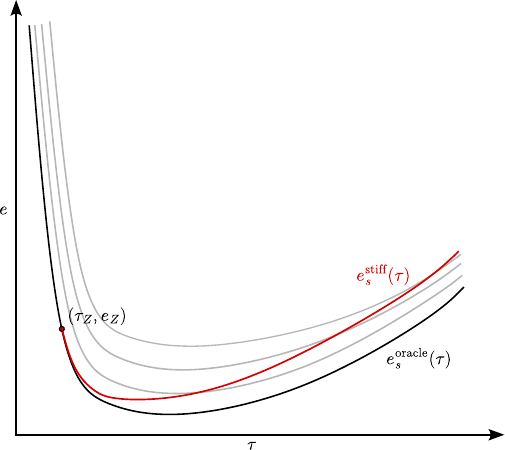}
        \caption{A visual depiction of the proof objective. That is, the stiffened gas isentrope remains on or above the oracle isentrope.
            Curves in gray depict isentropes of the oracle at higher specific entropies.}
        \label{proof:isentrop_comparison}
    \end{figure}

    To prove the result, we set $f(\tau) \eqq e^{\text{stiff}}_s(\tau) - e^{\text{oracle}}_s(\tau)$ and note that $f(\tau_Z) = 0$.
    From Lemma~\ref{lem:interpolation}, we have that $f'(\tau_Z) = f''(\tau_Z) = 0$.
    Therefore, we need only show that $f''(\tau) \geq 0$ for all $\tau \geq \tau_Z$.
    Note that $f''(\tau) \geq 0$ is equivalent to the following
    \begin{equation} \label{eq:bmod_inequal}
        K_s^{\text{stiff}}(\tau) \geq K_s^{\text{oracle}}(\tau),
    \end{equation}
    where $K_s^{\text{stiff}}$ and $K_s^{\text{oracle}}$ are the isentropic bulk moduli on their respective isentropes.
    Along an isentropic expansion, the pressure for the stiffened gas law is $\pstiff_s(\tau) = \frac{C_Z}{\tau^{\gamma}} - \calp_{\infty,Z}$ where $C_Z$ is some constant dependent on the specific entropy (see~\citep[eqn.~(4.44)]{clayton2023robust}).
    Recalling that $K^{\text{stiff}}_s(\tau) = \gamma(\pstiff_s(\tau) + \calp_{\infty,Z})$, the inequality~\eqref{eq:bmod_inequal} is equivalent to
    \[
        1 \geq \frac{K_s^{\text{oracle}}(\tau) \tau^{\gamma}}{\gamma C_Z}.
    \]
    Since equality holds for $\tau = \tau_L$, we only need to show that $K_s^{\text{oracle}}(\tau) \tau^{\gamma}$ is a decreasing function.
    Note that $K_s^{\text{oracle}}(\tau) \in W^{1,\infty}_{\text{loc}}(\Real_+)$ and therefore, we shall show that $\partial_\tau(K_s^{\text{oracle}}(\tau) \tau^{\gamma} ) \leq 0$ almost everywhere.

    We have that $\frac{\diff}{\diff \tau} (K_s^{\text{oracle}}(\tau) \tau^{\gamma}) = \tau^{\gamma} \partial_\tau K_s^{\text{oracle}}(\tau) + \gamma \tau^{\gamma - 1} K_s^{\text{oracle}}(\tau)$.
    Using the identity, $\partial^2_{\tau\tau} p(\tau, s) = 2KG/\tau^2$, we have
    \[
        \frac{\diff}{\diff \tau}(K_s^{\text{oracle}}(\tau) \tau^{\gamma}) = \tau^{\gamma - 1} K_s^{\text{oracle}}(\tau) \Big( (\gamma + 1)  - 2G_s^{\text{oracle}}(\tau) \Big),
    \]
    for almost every $\tau \in (\tau_Z, \infty)$.
    Substituting in $\gamma = 1+\epsilon$, we have the following inequality,
    \begin{equation*}
        \begin{split}
            \frac{\diff}{\diff \tau} \big(K_s^{\text{oracle}}(\tau) \tau^{1+\epsilon}\big) & = \tau^{\epsilon} K_s^{\text{oracle}}(\tau) \Big( 2+\epsilon - 2G_s^{\text{oracle}}(\tau) \Big)                                                       \\
                                                                                           & \leq 2\tau^{\epsilon} K_s^{\text{oracle}}(\tau) \Big( \frac{\epsilon}{2} - \big(\essinf_{\tau \geq \tau_Z} G_s^{\text{oracle}}(\tau) - 1 \big) \Big).
        \end{split}
    \end{equation*}
    Thus, from assumption \eqref{eq:essential_G}, we can select $\epsilon_0 \eqq 2(\essinf_{\tau\geq\tau_Z} G_s^{\text{oracle}}(\tau) - 1)$ and hence for all $\epsilon \in (0,\epsilon_0)$, we have that $\frac{\diff}{\diff \tau} (K_s^{\text{oracle}}(\tau) \tau^{1+\epsilon}) \leq 0$.
    Therefore, $e^{\text{stiff}}_s(\tau) \geq e^{\text{oracle}}_s(\tau)$ for $\tau \in [\tau_Z, \infty)$.
\end{proof}
\begin{lemma}[Shock wave] \label{lem:shock_wave}
    Let $Z \in \{L, R\}$.
    Assume the $Z$-wave in the extended Riemann problem is a shock wave.
    If
    \begin{equation} \label{eq:shock_G}
        \essinf_{s \geq s_Z, \tau \leq \tau_Z} G(\tau,s) > -\frac32,
    \end{equation}
    then the specific internal energy of the stiffened gas EOS across the shock wave is above the isentrope of the oracle which intersects at $(\tau_Z, e_Z)$.
    That is,
    \begin{equation}
        e^{\textup{stiff}}_\calH(\tau^*_Z) \geq e^{\textup{oracle}}_s(\tau^*_Z)
    \end{equation}
    where $\tau^*_Z < \tau_Z$ is the specific volume across the shock wave and $e^{\textup{stiff}}_{\calH}(\tau)$ is the specific internal energy on the Hugoniot curve, $\calH(\tau, p) = 0$.
\end{lemma}
\begin{proof}
    Recall that the Hugoniot curve can be written as
    \begin{equation}
        \calH(\tau, p) \eqq e(\tau, p) - e_Z + \frac12 (p + p_Z)(\tau - \tau_Z) = 0,
    \end{equation}
    see \citet[Chpt.~III. Eq.~2.13]{godlewski_raviart_2021}.
    This equation is agnostic of the equation of state.
    The pressure across the shock wave for the stiffened gas law is:
    \begin{equation}
        p_{\calH}^{\text{stiff}}(\tau)
        \eqq (p_Z + \calp_{\infty,Z}) \frac{\frac{\gamma - 1}{\gamma + 1} - \frac{\tau_Z}{\tau}}{\frac{\gamma - 1}{\gamma + 1} \frac{\tau_Z}{\tau} - 1} -
        \calp_{\infty,Z},
    \end{equation}
    for $\tau \in (\frac{\gamma-1}{\gamma+1}\tau_Z, \tau_Z)$, (see~\cite[Sec.~4.2.1]{Toro_book} for the general process as well as~\citep[eqn.~4.32]{clayton2023robust}).
    From this, we see that,
    \begin{equation}
        e^{\text{stiff}}_{\calH}(\tau) = e_Z + \frac12 (p^{\text{stiff}}_{\calH}(\tau) + p_Z)(\tau - \tau_Z).
    \end{equation}
    Just as in the proof of Lemma~\ref{lem:expansion_wave}, we have that $e^{\text{stiff}}_{\calH}(\tau_Z) = e_Z$ and
    $-\frac{\diff}{\diff \tau} e^{\text{stiff}}_{\calH}(\tau_Z) = p_Z = -\partial_{\tau} e_s^{\text{oracle}}(\tau_Z)$.
    Therefore, it suffices to show that $\frac{\diff^2}{\diff \tau^2} e^{\text{stiff}}_{\calH}(\tau) \geq \partial^2_{\tau} e^{\text{oracle}}_s(\tau)$ for all $\tau \in (\frac{\gamma-1}{\gamma+1}\tau_Z, \tau_Z]$ since the Hugoniot has an asymptote at $\tau = \frac{\gamma-1}{\gamma+1}\tau_Z$.
    We first note the derivatives of $p^{\text{stiff}}_{\calH}(\tau)$:
    \begin{align}
        \frac{\diff}{\diff \tau} p_\calH^{\text{stiff}}(\tau)     & = -(p_L + p_{\infty,L}) \frac{\frac{4\gamma}{(\gamma + 1)^2}\tau_L}{\big( \frac{\gamma-1}{\gamma+1} \tau_L - \tau)^2}, \label{proof:deriv_hugo_p1} \\
        \frac{\diff^2}{\diff \tau^2} p_\calH^{\text{stiff}}(\tau) & = -(p_L + p_{\infty,L}) \frac{\frac{8\gamma}{(\gamma+1)^2}\tau_L}{\big( \frac{\gamma-1}{\gamma+1} \tau_L - \tau)^3}. \label{proof:deriv_hugo_p2}
    \end{align}
    Thus, the inequality to satisfy becomes,
    \begin{equation} \label{proof:energy_deriv_inequl}
        \begin{split}
            \frac{\diff^2}{\diff \tau^2} e^{\text{stiff}}_{\calH}(\tau) & = -\frac12 \frac{\diff^2}{\diff \tau^2}p^{\text{stiff}}_{\calH}(\tau)(\tau - \tau_Z) - \frac{\diff}{\diff \tau} p^{\text{stiff}}_{\calH}(\tau) \\
                                                                        & \geq \partial^2_{\tau} e^{\text{oracle}}_s(\tau) = \frac{K_s^{\text{oracle}}(\tau)}{\tau}.
        \end{split}
    \end{equation}
    Using \eqref{proof:deriv_hugo_p1} and \eqref{proof:deriv_hugo_p2},
    inequality \eqref{proof:energy_deriv_inequl} becomes,
    \begin{equation}
        -\frac{
            \frac{8\gamma \tau_Z^2 (p_Z + p_{\infty,Z})}{(\gamma + 1)^3}
        }{
            \big( \frac{\gamma-1}{\gamma+1}\tau_Z - \tau \big)^3
        }
        \geq \frac{K^{\text{oracle}}_s(\tau)}{\tau}.
    \end{equation}
    Recall that $K_Z = \gamma(p_Z + p_{\infty,Z})$, we then rewrite the expression above as,
    \begin{equation}
        K_Z \geq -\frac{K_s^{\text{oracle}}(\tau)}{\tau} \frac{(\gamma+1)^3}{8\tau_Z^2} \Big( \frac{\gamma-1}{\gamma+1} \tau_Z - \tau\Big)^3 =: f(\tau).
    \end{equation}
    Notice that $f(\tau_Z) = K_Z$ and $f\big(\frac{\gamma-1}{\gamma+1} \tau_Z \big) = 0$.
    Thus, if $f(\tau)$ is an increasing function on $\big(\frac{\gamma-1}{\gamma+1}\tau_Z, \tau_Z \big]$, then the result is proven.
    Differentiating we see that,
    \begin{equation*}
        f'(\tau) = \frac{(\gamma+1)^3}{8 \tau^2_Z} \Big( \partial^2_\tau p^{\text{oracle}}_s \Big( \frac{\gamma-1}{\gamma+1} \tau_Z - \tau \Big)^3 + 3\frac{K^{\text{oracle}}_s(\tau)}{\tau} \Big( \frac{\gamma-1}{\gamma+1} \tau_Z - \tau\Big)^2 \Big)
    \end{equation*}
    where we used the fact that $\partial_\tau (-K^{\text{oracle}}_s(\tau)/\tau) = \partial^2_\tau p_s^{\text{oracle}}(\tau)$.
    Noting the identity $\partial^2_\tau p(\tau,s) = 2KG/\tau^2$, we arrive at
    \begin{equation}
        f'(\tau) = \frac{(\gamma+1)^3}{8\tau_Z^2} \Big( \frac{\gamma-1}{\gamma+1} \tau_Z - \tau \Big)^2 \frac{K_s^{\text{oracle}}(\tau)}{\tau} \Big( \Big( \frac{\gamma-1}{\gamma+1} \tau_Z - \tau \Big) \frac{2G_s^{\text{oracle}}(\tau)}{\tau} + 3\Big).
    \end{equation}
    Since $\tau \in \big(\frac{\gamma-1}{\gamma+1} \tau_Z, \tau_Z\big]$, a sufficient condition for $f(\tau)$ to be an increasing function is,
    \begin{equation}
        2\frac{\gamma-1}{\gamma+1} \frac{\tau_Z}{\tau} \essinf_{\tau\leq\tau_Z} G^{\text{oracle}}_s(\tau) + 3 \geq 0
        \quad \Leftrightarrow \quad
        \essinf_{\tau \leq \tau_Z} G^{\text{oracle}}_s(\tau) \geq \frac{-3/2}{\frac{\gamma-1}{\gamma+1} \frac{\tau_Z}{\tau}}.
    \end{equation}
    From assumption \eqref{eq:shock_G}, the inequality holds for any $\gamma > 1$ and $\tau \in (\frac{\gamma-1}{\gamma+1} \tau_z, \tau_Z]$.
\end{proof}

\begin{lemma}[Minimum entropy principle with the average state] \label{lem:invariant_domain}
    Let $s_{\min} \eqq \min(s_L, s_R)$.
    Assume all conditions of Assumption~\ref{def:thermo_consistency} hold for all $s \geq s_{\min}$.
    Let $\gamma$, $\calp_{\infty,Z}$, and $\calq_Z$ defined in \eqref{eq:gammaz}--\eqref{eq:qz}, respectively for $Z \in \{L, R\}$.
    Then there exists $\epsilon_0 > 0$ such that for all $\epsilon \in (0,\epsilon_0)$, the minimum entropy principle is preserved, $s(1 / \overline{\rho}_{LR}, \sfe(\overline{\bu}_{LR})) \geq \min(s_L, s_R)$, where $\overline{\bu}_{LR} = \overline{\bu}_{LR}(\htlambda_{LR}; \tbu_L, \tbu_R, \bn)$ is defined by \eqref{eq:auxiliary_state} where $\htlambda_{LR}$ is any upper bound to the maximum wave speed of the extended Riemann problem (for the chosen $\epsilon > 0$).
\end{lemma}
\begin{proof}
    Recall that $\overline{\tbu}_{LR} = (\overline{\bu}_{LR}, \overline{\calP}_{\infty, LR}, \overline{\calQ}_{LR})^{\mathsf{T}}$ (equation \eqref{eq:extended_bar_states}).
    We define
    \begin{equation}
        \widetilde{\Psi}^s(\widetilde{\bsfU}) := \rho (\sfs^{\text{oracle}}(\bsfU) - s_{\min}).
    \end{equation}
    The result is proven if we can show that $\tPsi^s(\overline{\tbu}_{LR}) = \overline{\rho}_{LR} (\sfs^{\text{oracle}}(\overline{\bu}_{LR}) - s_{\min}) \geq 0$; that is, the state $\overline{\bu}_{LR}$ exists on or above the local minimum isentrope.
    In other words, $\sfe(\overline{\bu}_{LR}) = e^{\text{oracle}}\big(\frac{1}{\overline{\rho}_{LR}}, \sfs^{\text{oracle}}(\overline{\bu}_{LR})\big) \geq e^{\text{oracle}}(1/\overline{\rho}_{LR}, s_{\min})$.

    From~\citep{clayton2023robust}, the solution to the extended Riemann problem has positive density, hence $\overline{\rho}_{LR} > 0$. Therefore we only need to show that $\sfs^{\text{oracle}}(\overline{\bu}_{LR}) \geq s_{\min}$.
    Since $e^{\text{oracle}}(\tau, s)$ is convex, this implies that $\rho \sfs^{\text{oracle}}(\bu)$ is a strictly concave function (\citep[III, 1.1, Thm.~1.1]{godlewski_raviart_2021}), hence $\widetilde{\Psi}^s$ is strictly concave.
    Since $\htlambda_{LR}$ is an upper bound on the maximum wave speed in the extended Riemann problem, we have that,
    \[
        \overline{\tbu}_{LR} = \int^{1/2}_{-1/2} \tbu(x,t) \diff x,
    \]
    under the CFL constraint $t\htlambda_{LR} \leq \frac12$.
    Applying Jensen's inequality we have,
    \begin{equation*}
        \widetilde{\Psi}^s(\overline{\tbu}_{LR}) \geq \int_{-1/2}^{1/2} \widetilde{\Psi}^s(\tbu(x,t)) \diff x = \int_{-1/2}^{1/2} \rho(x,t) (\sfs^{\text{oracle}}(\bu(x,t)) - s_{\min}) \diff x.
    \end{equation*}
    We now proceed by showing that $s^{\text{oracle}}(\bu(x,t)) \geq s_{\min}$ for a.e.~$x$ in the solution to the extended Riemann problem.

    Let $v^*$ denote the contact velocity in the extended Riemann problem.
    Let $\epsilon_0 > 0$ be given from Lemma~\ref{lem:expansion_wave}, then for all $\epsilon \in (0,\epsilon_0)$, we have by Lemmas~\ref{lem:expansion_wave} and~\ref{lem:shock_wave}, that to the left of the contact, $\sfe(\bu(x,t)) \geq e_{s_L}^{\text{oracle}}(\tau(x,t))$ hence,
    \begin{equation*}
        \int_{-1/2}^{v^*t} \rho(\xi) s^{\text{oracle}}(\tau(\xi), \sfe(\bu(\xi)) \diff x \geq \int_{-1/2}^{v^*t} \rho(\xi) s^{\text{oracle}}(\tau(\xi), e^{\text{oracle}}_{s_L}(\tau(\xi)) \diff x,
    \end{equation*}
    using $\xi \eqq x/t$, where the inequality comes from the thermodynamic identity: $\big(\frac{\partial s}{\partial e}\big)_\tau = T > 0$.
    We have a similar result to the right of the contact:
    \begin{equation*}
        \int^{1/2}_{v^*t} \rho(\xi) s^{\text{oracle}}(\tau(\xi), \sfe(\bu(\xi)) \diff x
        \geq \int^{1/2}_{v^*t} \rho(\xi) s^{\text{oracle}}(\tau(\xi), e^{\text{oracle}}_{s_R}(\tau(\xi)) \diff x.
    \end{equation*}
    Combining these results, we see,
    \begin{equation}
        \tPsi^s(\overline{\tbu}_{LR})
        \geq \int_{-1/2}^{1/2} \rho(\xi) \big(s^{\text{oracle}}(\tau(\xi), e^{\text{oracle}}_{s_{\min}}(\tau(\xi))) - s_{\min}\big) \diff x.
    \end{equation}
    However, note that the right hand side is identically zero, since we are evaluating the specific entropy on the isentrope at $s = s_{\min}$.
    This completes the proof.
\end{proof}

\subsection{Robust wave speed upper estimate} \label{sec:robust_wave_speed_estimate}
Since Lemma~\ref{lem:invariant_domain} holds for all upper bounding wave speeds derived from $\epsilon > 0$ sufficiently small, we compute the wave speed as $\epsilon \to 0^+$.
Note for $\epsilon = 0$, the problem becomes becomes degenerate; however, the wave speed in the extended Riemann problem converges to a non-degenerate result as $\epsilon \to 0^+$.

The wave speed in the extended Riemann problem as $\epsilon \to 0^+$ is,
\begin{subequations} \label{eq:eps_zero_wave_speed}
    \begin{align}
        \tlambda_L(\calp^*) & = v_L - c_L \sqrt{\max\Big(\frac{\calp^* - p_L}{K_L}, 0 \Big) + 1} \\
        \tlambda_R(\calp^*) & = v_R + c_R \sqrt{\max\Big(\frac{\calp^* - p_R}{K_R}, 0 \Big) + 1}
    \end{align}
\end{subequations}
where $\calp^*$ is the root of the equations,
\begin{equation}
    \varphi(\calp) \eqq f_L(\calp) + f_R(\calp) + v_R - v_L = 0,
\end{equation}
with
\begin{equation}
    f_Z(\calp) \eqq \begin{cases}
        f_Z^{\text{exp}}(\calp) \eqq c_Z \log \Big( \frac{\calp + p_{\infty,Z}}{K_Z} \Big),     & \quad \text{ if } \calp \leq p_Z, \\
        f_Z^{\text{shock}}(\calp) \eqq \frac{\calp - p_Z}{\sqrt{\rho_Z(\calp + p_{\infty,Z})}}, & \quad \text{ if } \calp > p_Z,
    \end{cases}
\end{equation}
for $Z \in \{L, R\}$ and recall that $\calp_{\infty,Z} = K_Z - p_Z$.
Note that $\varphi(\calp) = 0$ can only be solved numerically.
We propose an alternative function to solve that provides an upper bound on the root of $\varphi(\calp) = 0$.

\subsubsection{Upper bound on $\calp^*$}
In order to avoid solving a nonlinear equation, we propose an upper bound similar to what is done in \citep[Sec.~5]{clayton_2022}.
That is, we shall compute $\wcalp^*$ such that $\wcalp^* \geq \calp^*$.
In this case we have that $\tlambda_L(\wcalp^*) \leq \tlambda_L(\calp^*) < \tlambda_R(\calp^*) \leq \tlambda_R(\wcalp^*)$.

Using Lemma~\ref{lem:f_z_inequal}, we see that the function, $f_L^{\text{exp}}(\calp) + f_R^{\text{exp}}(\calp) + v_R - v_L$ is a lower bound of $\varphi(\calp)$.
Unfortunately, this equation cannot be solved analytically.
Therefore, we use the further lower bound,
\begin{equation}
    \widehat{\varphi}_{RR}(\calp) \eqq c_L \log\Big( \frac{\calp + \calp_{\infty,\min}}{K_L} \Big) + c_R \log\Big( \frac{\calp + \calp_{\infty,\min}}{K_R} \Big) + v_R - v_L,
\end{equation}
where $\calp_{\infty,\min} = \min(\calp_{\infty,L}, \calp_{\infty,R})$.
Thus, the upper bound on the root of $\varphi(\calp) = 0$ is found by solving $\widehat{\varphi}(\calp) = 0$, which gives us,
\begin{equation} \label{eq:double_exp_approx}
    \wcalp^*_{RR} \eqq \exp\Big(\frac{c_L \log(K_L) + c_R \log(K_R) - (v_R - v_L)}{c_L + c_R}\Big) - \calp_{\infty,\min}.
\end{equation}

In the case that $\varphi(\calp_{\max}) \leq 0$, then the the solution consists of a double shock.
We can therefore define a lower bound to the double shock function:
\begin{equation}
    \widehat{\varphi}_{SS}(\calp) = \frac{\calp - p_L}{\sqrt{\rho_L(\calp + p_{\infty,\max})}} + \frac{\calp - p_R}{\sqrt{\rho_R(\calp + \calp_{\infty,\max})}} + v_R - v_L.
\end{equation}
The root of this equation is,
\begin{equation} \label{eq:double_shock_approx}
    \wcalp^*_{SS} = \Big(\frac{-b + \sqrt{b^2-4ac}}{2a}\Big)^2 - \calp_{\infty,\max}
\end{equation}
where $a = \sqrt{\rho_L} + \sqrt{\rho_R}$, $b = \sqrt{\rho_L \rho_R} (v_R - v_L)$, and $c = -\sqrt{\rho_R} (p_L + \calp_{\infty,\max}) - \sqrt{\rho_L}(p_R + \calp_{\infty,\max})$.

A visual depiction of the approximating functions is provided in Figure~\ref{fig:phi_func} for Riemann data which produces a shock-contact-shock wave structure.
The algorithm which determines the upper bound on $\calp^*$ is provided in Algorithm~\ref{alg:phat_star}.
Computing $\wcalp^*$ from this algorithm, we then compute the wave speeds by, $\lambda_L(\wcalp^*)$ and $\lambda_R(\wcalp^*)$.

\begin{algorithm}
    \caption{Upper bound on maximum wave speed computation}
    \label{alg:phat_star}
    \begin{algorithmic}
        \Procedure{WaveSpeedUpperBound}{$p_L, p_R, K_L, K_R$}
        \State $p_{\min} \gets \min(p_L, p_R)$
        \State $p_{\max} \gets \max(p_L, p_R)$
        \State $p_{\infty,\min} \gets \min(K_L - p_L, K_R - p_R)$
        \State $p_{\infty,\max} \gets \max(K_L - p_L, K_R - p_R)$
        \If{$\varphi(p_{\min}) \leq 0$}
        \State $\wp^* \gets -p_{\infty,\min}$ \Comment{Double expansion case}
        \ElsIf{$\varphi(p_{\max}) \geq 0$}
        \State $\wp^* \gets \min(p_{\max}, \wp^*_{RR})$ from \eqref{eq:double_exp_approx} \Comment{Expansion/shock case}
        \Else
        \State $\wp^* \gets \min(\wp^*_{RR},\wp^*_{SS})$ from \eqref{eq:double_exp_approx} and \eqref{eq:double_shock_approx} \Comment{Double shock case}
        \EndIf
        \State \textbf{return} $\htlambda_{LR} \gets \max(|\tlambda_L(\wp^*)|, |\tlambda_R(\wp^*)|)$ from \eqref{eq:eps_zero_wave_speed}
        \EndProcedure
    \end{algorithmic}
\end{algorithm}

\begin{figure} \label{fig:phi_func}
    \centering
    \begin{tikzpicture}
        \begin{axis}[
                width=10cm, height=7cm,
                xlabel={Pressure, \(p\)},
                ylabel={\(\varphi(p)\)},
                title={Comparison of approximating functions},
                legend pos=north west,
                legend cell align=left,
                grid=major,
            ]
            \addplot[domain=1:10,samples=200,thick,solid,black] {phiphi(x)};
            \addlegendentry{\(\varphi\)}
            \addplot[domain=1:10,samples=200,thick,solid,blue] {phiRR(x)};
            \addlegendentry{\(\widehat{\varphi}_{RR}\)}
            \addplot[domain=1:10,samples=200,thick,solid,red] {phiSS(x)};
            \addlegendentry{\(\widehat{\varphi}_{SS}\)}
        \end{axis}
    \end{tikzpicture}
    \caption{A comparison of the different approximating functions of $\varphi$ for some simple Riemann data. In this figure, the root of $\varphi(p) = 0$ occurs in the double shock case,
        whence we see that $\widehat{\varphi}_{SS}$ provides a very close approximation compared to $\widehat{\varphi}_{RR}$.}
\end{figure}

\begin{theorem}[Minimum entropy principle preserving] \label{thm:main_theorem}
    Let $\bsfU_i\upn\in\calA$ for all $i\in\calV$ and Assumption~\ref{def:thermo_consistency} holds for the equation of state.
    Then, the update $\bsfU_i\upnp$ given by~\eqref{eq:numerical_method} using the wave speed estimates $\{\htlambda_{ij}\}$ detailed in \S\ref{sec:robust_wave_speed_estimate}
    satisfies $\bsfU^{n+1}_i \in \calA_{s^i_{\min}}$ where $s^i_{\min} \eqq \min_{j\in\calI(i)} \sfs(\bsfU^n_j)$.
    That is, $\sfs(\bsfU_i\upnp) \geq s^i_{\min}$ for all $i \in \calV$.
\end{theorem}

\begin{proof}
    Define $\Psi^s_i(\bsfU_i^n) \eqq \rho^n_i (\sfs(\bsfU^n_i) - \min_{j\in\calI(i)} \sfs(\bsfU^n_j))$.
    Recall that $\rho\sfs$ is a strictly concave function since $e(\tau, s)$ is strictly convex (see \citep[III, 1.1, Thm.~1.1]{godlewski_raviart_2021}).
    Hence $\Psi^s_i$ is concave.
    Then,
    \begin{equation*}
        \Psi^s_i(\bsfU^{n+1}_i) \geq \sum_{k\in\calI(i)} \omega_{ik} \upn \Psi^s_i(\bsfU_k\upn) + \sum_{j\in\calI(i)\setminus\{i\}}\mu_{ij}\upn \Psi^s_i(\overline{\bsfU}^n_{ij}(\htlambda_{ij})).
    \end{equation*}
    Since $\Psi_i^s(\bsfU^n_k) \geq 0$ for all $k \in \calI(i)$ and by Lemma~\ref{lem:invariant_domain},
    \[
        \Psi^s_i(\overline{\bsfU}^n_{ij}(\htlambda_{ij})) \geq \overline{\rho}^n_{ij}(\min(\sfs(\bsfU^n_i), \sfs(\bsfU^n_j)) - \min_{l\in\calI(i)} \sfs(\bsfU^n_l)) \geq 0
    \]
    for all $j \in \calI(i) \setminus\{i\}$, hence $\Psi^s_i(\bsfU^{n+1}_i) \geq 0$.
    Thus the minimum principle on the specific entropy holds for the update $\bsfU_i\upnp$.
\end{proof}

\section{Equations of state} \label{sec:eos_details}
In this section, we discuss two practical equations of state that satisfy Assumption~\ref{def:thermo_consistency}.
\subsection{The simple MACAW equation of state}\label{sec:simple_macaw}
The simple MACAW equation of state, introduced in~\cite{lozanoSimpleMACAW}, is a Mie-Gr{\"u}neisen type equation of state and is a simplified version of the 
MACAW EOS described in~\citep{lozano2023analytic}.
This equation of state can be used for describing condensed materials such as copper.

The reference isentrope is chosen to be the cold curve -- which is the curve for which the isentrope and isotherm coincide at $s = T = 0$. 
The pressure is defined by:
\begin{equation}
    \pi(\tau, e) \eqq p(\tau, s(\tau, e)) = p_{\text{cold}}(\tau) + \frac{\Gamma_0}{\tau} (e - e_{\text{cold}}(\tau)),
\end{equation}
where $A > 0$ and $B > 0$ are material-dependent constants, $\tau_0$ is a reference specific volume and $\Gamma_0$ is a constant Gr{\"u}neissen parameter.
The cold curves for the specific internal energy and pressure are given by
\begin{subequations}
\begin{align}
    e_{\text{cold}}(\tau) &= A \tau_0 \Big( \Big( \frac{\tau}{\tau_0} \Big)^{-B} + B \Big( \frac{\tau}{\tau_0} \Big) - (B+1) \Big), \\
    p_{\text{cold}}(\tau) &= -e'_{\text{cold}}(\tau) = AB\Big(\Big( \frac{\tau_0}{\tau}\Big)^{B+1} - 1 \Big).
\end{align}
\end{subequations}
The isentropic bulk modulus is $\kappa(\tau, e) = K_{\text{cold}}(\tau) + \frac{\Gamma_0(\Gamma_0 + 1)}{\tau} (e - e_{\text{cold}}(\tau))$ where $K_{\text{cold}}(\tau) = -\tau p_{\text{cold}}'(\tau) = AB(B+1) \big( \frac{\tau}{\tau_0} \big)^{-(B+1)}$.
Following Proposition~\ref{prop:mie_gru_fund_deriv}, we have that the fundamental derivative for the simple MACAW EOS satisfies $\sfG \geq \frac12 ( 1 + \min(B+1, \Gamma_0 + 1)) > 1$.   
\subsection{The reactant Davis equation of state}\label{sec:davis}
The Davis equation of state is typically used for modeling high explosives for both the reactants and products. 
Depending on the application, the references curves and parameters can be very different. We refer the reader to~\citep{davis2000complete} for the reactant EOS and~\citep{davis1993equation} for the product EOS.
In this work, we focus only on the reactant version. 
Formally, the Davis EOS is given by
\begin{equation} \label{eq:davis_eos}
    e(\tau, s) \eqq e_s(\tau) + \frac{c^0_v T_s(\tau)}{1 + \alpha_{ST}} \Big( \Big( 1 + \frac{\alpha_{ST}}{c^0_v}(s - s_0) \Big)^{1 + \alpha_{ST}^{-1}} - 1 \Big),
\end{equation}
where $e_s(\tau)$ is some reference isentrope, $c_v^0$ is a reference specific heat capacity at constant volume, $\alpha_{ST}$, is another adjustable parameter, $s_0$ is a reference specific entropy. The function $T_s(\tau)$ is defined by
\begin{equation}
    T_s(\tau) = T_0 \begin{cases}
        \big( \frac{\tau}{\tau_0} \big)^{-\Gamma_0}, &\quad \text{ if } \tau > \tau_0, \\
        \big( \frac{\tau}{\tau_0} \big)^{-(\Gamma_0 + Z)} \exp\big(-Z \big(1 - \frac{\tau}{\tau_0} \big) \big), &\quad \text{ if } \tau \leq \tau_0,
    \end{cases}
\end{equation}
where $\Gamma_0$ is a reference Gr{\"u}neisen coefficient, $\tau_0$ a reference specific volume, and $Z$ a material dependent parameter.
As described in~\citep[Sec.~III.C]{velizhanin2023notes}, setting $s_0 = c_v^0 / \alpha_{ST}$ allows the reactant Davis EOS to satisfy the 3rd law of thermodynamics. 
This is necessary for our methodology since the case when $s < c_v^0 / \alpha_{ST}$ implies that $e(\tau, s)$ is no longer convex (which contradicts Assumption~\ref{def:thermo_consistency}).
Therefore, we restrict ourselves to thermodynamic states with specific entropy above $c_v^0 / \alpha_{ST}$.

The reactant Davis EOS can be written in the general Mie-Gr{\"u}neisen form
\begin{equation} \label{eq:davis_mie_gru}
    \pi(\tau, e) \eqq p(\tau, s(\tau, e)) = p_s(\tau) + \frac{\Gamma(\tau)}{\tau} (e - e_s(\tau)),
\end{equation}
where the reference isentropes for $e$ and $p$ and the Gr{\"u}neisen coefficient are given by
\begin{subequations}
\begin{align}
    e_s(\tau) &= e_0 + \frac{A^2}{16B^2} \begin{cases}
        \exp(4By) - 4By - 1, &\, \text{ if } \tau > \tau_0, \\
        \sum_{n=2}^4 \frac{(4By)^n}{n!} + C \frac{(4By)^5}{5!} + \frac{4By^3}{3(1 - y)^3}, &\, \text{ if } \tau \leq \tau_0,
    \end{cases} \label{eq:davis_eos_e_ref} \\
    p_s(\tau) &= \frac{A^2}{4B\tau_0} \begin{cases}
        \exp(4By) - 1, &\, \text{ if } \tau > \tau_0, \\
        \sum_{n=1}^3 \frac{(4By)^n}{n!} + C \frac{(4By)^4}{4!} + \frac{y^2}{(1 - y)^4}, &\, \text{ if } \tau \leq \tau_0,
    \end{cases} \label{eq:davis_eos_p_ref} \\
    \Gamma(\tau) &= \begin{cases}
        \Gamma_0, &\, \text{ if } \tau > \tau_0, \\
        \Gamma_0 + Zy, &\,  \text{ if } \tau \leq \tau_0,
    \end{cases} \label{eq:davis_eos_gruneisen}
\end{align}
\end{subequations}
where $y(\tau) = 1 - \frac{\tau}{\tau_0}$ and $A$, $B$, $C$, and $e_0$ are some material parameters.
The isentropic bulk modulus for the reactant Davis EOS is given by
\begin{equation}
    \kappa(\tau, e) = -\tau p_s'(\tau) + \Big( \frac{\Gamma(\tau)(\Gamma(\tau) + 1)}{\tau} - \Gamma'(\tau) \Big) (e - e_s(\tau)).
\end{equation}
Note, from Table~\ref{tab:davis-parameters}, we set $Z = 0$ wherein $\kappa(\tau, e)$ becomes continuous but the fundamental derivative becomes discontinuous.

We now discuss the validity of Proposition~\ref{prop:mie_gru_fund_deriv} for the reactant Davis EOS. 
Since we set $Z = 0$, $\Gamma(\tau) = \Gamma_0 > 0$ and therefore, we only need to certify that $\inf_{\tau} (-\tau K'_s(\tau) / K_s(\tau)) > 1$.
As this is a function of a single variable, we visually inspect it over a large domain, to verify it is above 1.
For completeness, we provide the following formulas:
\begin{equation}
    K_s(\tau) = \frac{A^2}{\tau_0} (1 - y) \begin{cases}
        \exp(4By), & \text{if } \tau > \tau_0, \\
        \frac12 (4By)^2 + 4By + 1 + C \frac16 (4By)^3 + \frac{y(y+1)}{2B(1 - y)^5}, & \text{if } \tau \leq \tau_0,
    \end{cases}
\end{equation}
and
\begin{equation}
    -\tau K'_s(\tau) = 
    \frac{A^2}{\tau_0} (1 - y)
    \begin{cases}
        \exp(4By) \big( 4B(1-y) - 1 \big), &\, \text{ if } \tau > \tau_0, \\
        \begin{aligned}
        &(4B)^2y(1 - \tfrac32 y) + 4B(1 - 2y) - 1 \\
        &+ C(4B)^3 y^2(\tfrac12 - \tfrac23 y) + \frac{2y^2 + 5y + 1}{2B(1 - y)^5}
        \end{aligned}, &\, \text{ if } \tau \leq \tau_0.
    \end{cases}
\end{equation}
An alternative approach to assessing $G > 1$ is to observe the sound speed on each isentrope to see if $c(v,s)$ is a decreasing function of $v$.
For example we plot $c(v,s)$ for the Davis EOS with parameters from Table~\ref{tab:davis-parameters} in Figure~\ref{fig:sound_speed_davis}.
This is certainly not a proof, but can offer insight when dealing with equations of state that are very complicated.

\begin{figure} \label{fig:sound_speed_davis}
\centering
\begin{tikzpicture}
    \begin{axis}[
      width=10cm, height=7cm,
      xlabel={Specific volume, \(v\)},
      ylabel={specific entropy, \(s\)},
      title={Sound speed manifold, \(c(v,s)\)},
      grid=major,
      colormap/hot2,
    ]
        \addplot3[surf, 
                  shader=faceted,
                  samples=40,
                  samples y=40,
                  domain=0.5:5, 
                  y domain=0.5:2] 
                  {isen_c(x,y)};
    \end{axis}
\end{tikzpicture}
\caption{Surface plot of the sound speed, $c$, for the Davis reactant EOS on $[0.5,5]\times[0.5,2]$}
\end{figure}
\section{Numerical Illustrations} \label{sec:numerical_results}
In this section, we illustrate the methodology proposed above.
We first verify the accuracy of the method with a smooth analytical solution to the Euler equations. Then, we perform a suite of 1D and 2D problems for each equation of state.
\subsection{Preliminaries}
The numerical tests are conducted using
the high performance code,~\texttt{ryujin}~\citep{ryujin-2021-1, ryujin-2021-3}. The code
uses continuous $\polQ_1$ finite elements, based on the scheme of~\citep{guermond2016}, on quadrangular meshes for the
spatial approximation and is built upon the \texttt{deal.II} finite element
library~\citep{dealII95}.
We note that the scheme used here is formally first-order accurate in space.
The wave speed estimate used in the algorithm are the ones detailed in Section~\ref{sec:robust_wave_speed_estimate} (see Algorithm~\ref{alg:phat_star}).
For all tests, the time-stepping is done with a three stage, third-order Runge-Kutta method which is made to be invariant-domain preserving following the techniques introduced in~\citep{Ern_Guermond_2022}. The time step size, $\Delta t$, is set via an explicit CFL condition and is computed during the first stage of each time step.
For all tests, we set the $\textup{CFL}$ number to be $0.9$. We use four types of boundary conditions depending on the application: \textup{(i)} Dirichlet; \textup{(ii)} slip; \textup{(iii)} do nothing.

For the sake of simplicity, we fix the parameters for each equation of state for all simulations. The Simple MACAW parameters are given in Table~\ref{tab:macaw-parameters} and are derived from~\cite[Tab.~1]{lozanoSimpleMACAW}.
The parameters for the reactant Davis equation of state are given in Table~\ref{tab:davis-parameters} and are derived from~\cite[Tab.~I]{jadrich2023uncertainty}.
\begin{table}[!ht]
    \centering
    \caption{Simple MACAW parameters}
    \label{tab:macaw-parameters}
    \begin{tabular}{lcccc}
        \toprule
        $\tau_0$ ($\SI{}{\frac{cm^3}{g}}$) & $\Gamma_0$ & $A$ (GPa) & $B$   \\
        \midrule
        $\frac{1}{8.952}$                  & $0.5$      & $7.3$     & $3.9$ \\
        \bottomrule
    \end{tabular}
\end{table}
\begin{table}[!ht]
    \centering
    \caption{Reactant Davis EOS parameters}
    \label{tab:davis-parameters}
    \begin{tabular}{lcccccccc}
        \toprule
        $\tau_0$ ($\SI{}{\frac{cm^3}{g}}$) & $\Gamma_0$ & $T_0$ ($\SI{}{K}$) & $A$ ($\SI{}{\frac{mm}{\mu s}}$) & $B$   & $C$   & $Z$ & $c^0_v$ ($\SI{}{\frac{kJ}{g \cdot K}}$) & $\alpha_{ST}$ \\
        \midrule
        $1/1.71$                           & 0.8437     & 298.15             & 2.434                           & 2.367 & 1.024 & 0   & 0.001072                                & 0.8484        \\
        \bottomrule
    \end{tabular}
\end{table}
The units for quantities of interest are as follows: $[\rho] = \SI{}{g~cm^{-3}},~[p] = \SI{}{GPa},~[e] = \SI{}{kJ~g^{-1}},~[T] = \SI{}{K},~[c] = \SI{}{mm~\mu s^{-1}}$.
From the discussion in Section~\ref{sec:eos_details}, in the case of the Simple MACAW EOS, we know that $G$ is bounded away from 1.
Hence the numerical method guarantees the minimum principle on the specific entropy.
Regarding the reactant Davis EOS, $G > 1$ has been observed upon visual inspection as described in Section~\ref{sec:davis}.

\subsection{Convergence test}\label{sec:convergence}
We now verify the accuracy of the methodology with a smooth, analytical solution to~\eqref{euler}.
We consider the one-dimensional test proposed in~\citep[Sec.~5.2]{Guermond_Nazarov_Popov_Tomas_SISC_2019} consisting of a smooth density wave with constant pressure. Furthermore, this test serves as a sanity check for the implementation of the equation of state of interest.
For the sake of brevity, we omit the details for the solution here and instead refer the reader to~\citep[Sec.~5.2]{Guermond_Nazarov_Popov_Tomas_SISC_2019} as well as~\citep[Sec.~5.3.1]{guermond2023}.

The tests are performed on uniform meshes with the domain $D = (0,\SI{1}{mm})$ until final time $t_{\text{final}} = \SI{0.2}{\mu s}$. The first mesh is composed of 100 cells. The other meshes are obtained by uniform refinement via bisection. We set Dirichlet boundary conditions for all tests. We report in Table~\ref{tab:smooth_wave} the respective convergence rates for each equation of state used with:
\begin{equation}
    \delta_1(t):=\frac{\|\rho_h(t)-\rho(t)\|_{1}}{\|\rho(t)\|_{1}}
    + \frac{\|\bbm_h(t)-\bbm(t)\|_{1}}{\|\bbm(t)\|_{1}} \\
    + \frac{\|E_h(t)-E(t)\|_{1}}{\|E(t)\|_{1}},
    \label{eq:delta}
\end{equation}
where $\rho(t),\,\bbm(t),\,E(t)$ are the exact states at time $t$, and
$\rho_h(t),\,\bbm_h(t),\,E_h(t)$ are the finite element approximations
at time $t$ for the respective conserved variables. We
observe that the convergence rate approaches 1 for each equation of state.

\begin{table}[ht]
    \centering
    \begin{tabular}[b]{lcrcr}
        \toprule
                  & \multicolumn{2}{c}{Simple MACAW} & \multicolumn{2}{c}{Davis}
        \\
        \cmidrule(lr){2-3} \cmidrule(lr){4-5}
        $|\calV|$ & $\delta_1(t_{\text{final}})$     &                           & $\delta_1(t_{\text{final}})$ &
        \\[0.3em]
        101       & \num{0.189314}                   & --                        & \num{0.209524}               & --   \\
        201       & \num{0.14934}                    & 0.34                      & \num{0.156638}               & 0.42 \\
        401       & \num{0.107107}                   & 0.48                      & \num{0.107045}               & 0.55 \\
        801       & \num{0.069566}                   & 0.62                      & \num{0.0670804}              & 0.67 \\
        1601      & \num{0.0412965}                  & 0.75                      & \num{0.0391279}              & 0.78 \\
        3201      & \num{0.0229007}                  & 0.85                      & \num{0.0216512}              & 0.85 \\
        6401      & \num{0.0121322}                  & 0.92                      & \num{0.0115331}              & 0.91 \\
        12801     & \num{0.00625326}                 & 0.96                      & \num{0.00598355}             & 0.95 \\
        25601     & \num{0.00317535}                 & 0.98                      & \num{0.0030532}              & 0.97 \\
        \bottomrule
    \end{tabular}
    \caption{%
        The consolidated error~\eqref{eq:delta} and corresponding convergence
        rates for the one-dimensional smooth traveling wave problem under uniform refinement.}
    \label{tab:smooth_wave}
\end{table}

\subsection{1D pull apart -- Simple MACAW} \label{sec:macaw-pull-1d}
We now perform a 1D ``pull apart'' Riemann problem using the Simple MACAW equation of state. The solution profile consists of two expansion waves into vacuum. The initial left/right data is as follows:  $(\rho_L, v_L, p_L) = (8.93,  -1.76, 0)$ and $(\rho_R, v_R, p_R) = (8.93, 1.76, 0)$.
The computational domain is set to $D = (0, \SI{1}{mm})$ with Dirichlet boundary conditions. The final time of the simulation is set to $t_{\text{final}} = \num{5e-2}{\mu s}$.
The computations are performed on a sequence of meshes composed of $\SI[scientific-notation = false]{1000}{},\,\SI[scientific-notation = false]{5000}{},\,\SI[scientific-notation = false]{10000}{},\,\SI[scientific-notation = false]{100000}{}$ elements, respectively.
The results are shown in Figure~\ref{fig:simple-pull} for pressure (left) and density (right) profiles, respectively.
We note that we have made no assumptions on the positivity of pressure -- differing from the authors' original approach in~\citep{clayton_2022,clayton2023robust}.
We see that the pressure approaches its minimum value (which bounds all isentropes from below). For the simple MACAW equation of state, this value is $-A\times B\approx -\SI[scientific-notation=false]{28.47}{GPa}$.
One may also notice the aberrant ``spike'' that appears in the density profile.
The refinement shows that the width and length of this spike is diminishing and hence converging to the expected constant state at the end of the rarefactions.
\begin{figure}[!ht]
    \centering
    \includegraphics[width=0.49\linewidth, clip, trim={22, 0, 22, 2}]{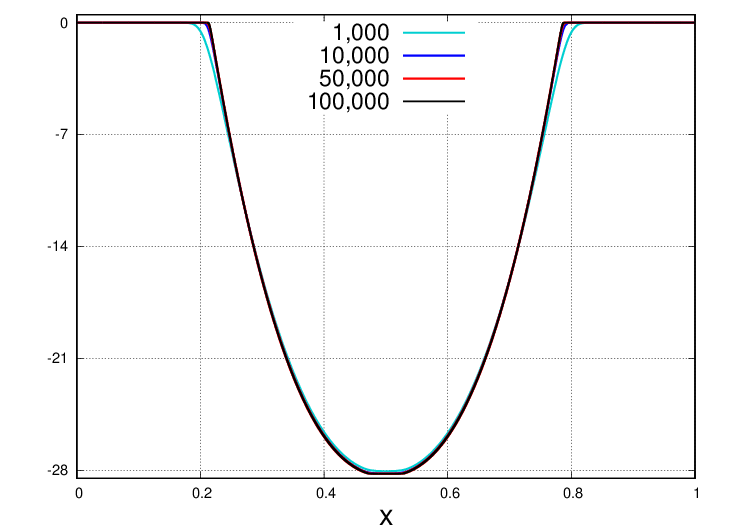}
    \includegraphics[width=0.49\linewidth, clip, trim={22, 0, 22, 2}]{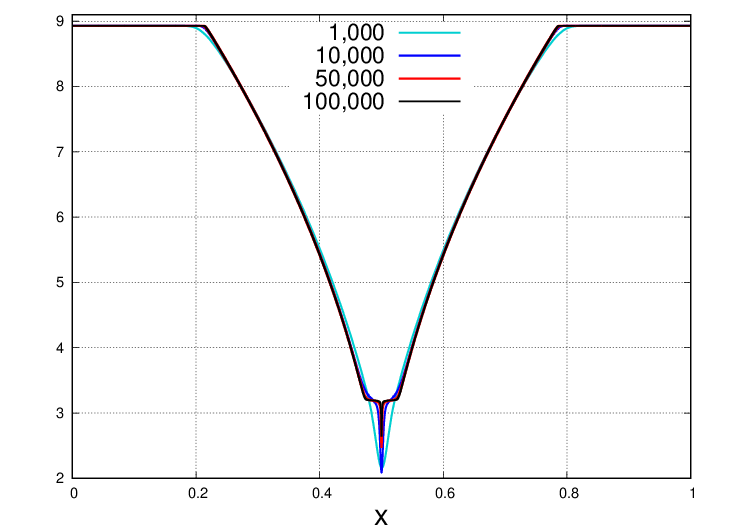}
    \caption{1D pull apart test at final time $t_{\text{final}} = \num{5e-2}{\mu s}$ for the Simple MACAW equation of state. The solution profiles shown here are pressure (left) and density (right).}
    \label{fig:simple-pull}
\end{figure}
\subsection{1D Leblanc-like shocktube -- Simple MACAW} \label{sec:macaw-RP}
We consider a 1D Leblanc-like shocktube problem for the Simple MACAW equation of state. The initial left/right data is as follows:  $(\rho_L, v_L, p_L) = (8.93,  0, 10^{8})$ and $(\rho_R, v_R, p_R) = (0.001, 0, -20)$. We recall that the minimum pressure on the cold curve is approximately $-\SI[scientific-notation=false]{28.47}{GPa}$ which is found by taking the limit as $\rho\rightarrow 0$ (\ie vacuum). Thus, the right state is close vacuum.

The computational domain for this test is $(-2, \SI{2}{mm})$ with Dirichlet boundary conditions. The final time of the simulation is set to $t_{\text{final}} = \num{1e-4}{\mu s}$.
The computations are performed on a sequence of meshes composed of $\SI[scientific-notation = false]{100000}{},\,\SI[scientific-notation = false]{200000}{},$ $\SI[scientific-notation = false]{400000}{},\,\SI[scientific-notation = false]{800000}{}$ elements, respectively.
The results are shown in Figure~\ref{fig:simple-leblanc} for pressure (left), density in low scale (middle), and a zoom in on the density (right) near the tail end of the expansion, respectively. We observe a small dip at the tail of the expansion wave which is similar behavior to what was observed in~\citep[Sec.~5.3.2]{guermond2016} for the standard Leblanc problem with ideal gas.
\begin{figure}[!ht]
    \centering
    \includegraphics[width=0.32\linewidth, clip, trim={22, 0, 15, 2}]{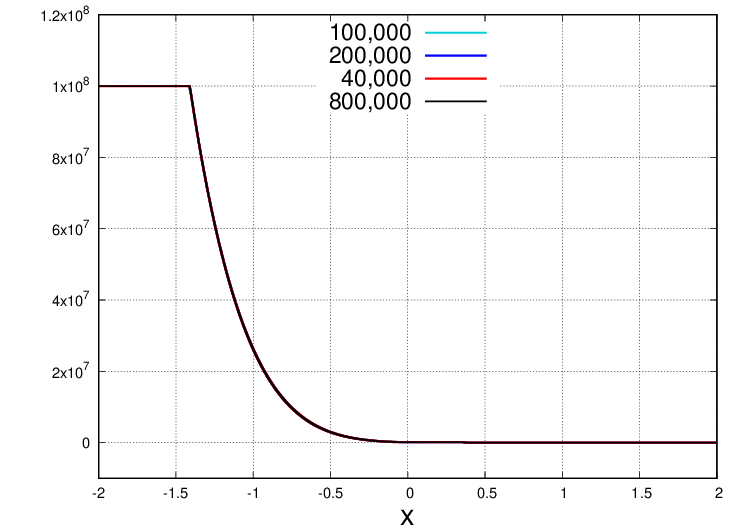}
    \includegraphics[width=0.32\linewidth, clip, trim={22, 0, 15, 2}]{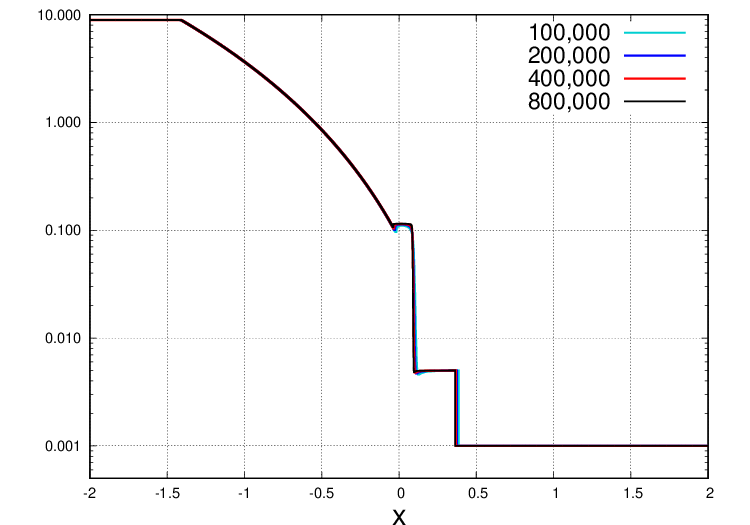}
    \includegraphics[width=0.32\linewidth, clip, trim={22, 0, 15, 2}]{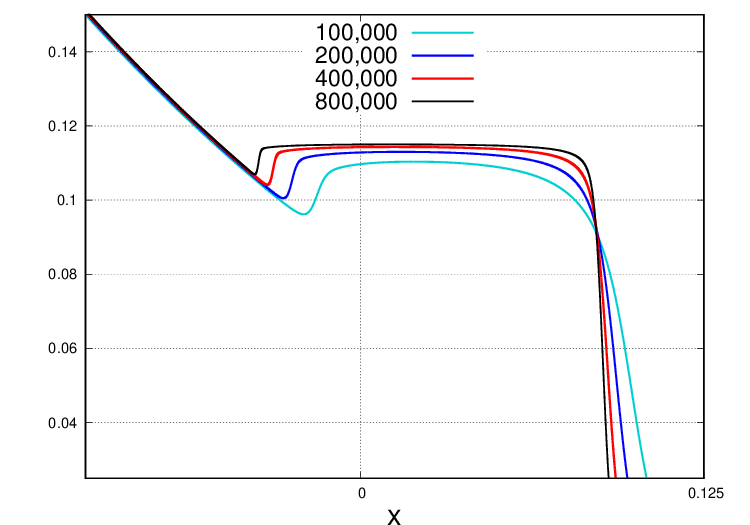}
    \caption{1D Leblanc-like shocktube at final time $t_{\text{final}} = \num{1e-4}{\mu s}$ for the Simple MACAW equation of state.}
    \label{fig:simple-leblanc}
\end{figure}

\subsection{1D blast wave -- Reactant Davis} \label{sec:davis-blast-1d}
We now reproduce the Woodward-Colella blast wave benchmark, first introduced in \citep[Sec.~IVa.]{woodward1984numerical}, using the reactant Davis EOS~\eqref{eq:davis_mie_gru} and a slightly modified initial state. The original benchmark sets a pressure value of $0.1$ in the ``low-pressure'' region $x\in (0.1, 0.9)$. We deviate from the stand value and instead set the pressure to $0$ in this region.
The computational domain is $D=(0,\SI{1}{mm})$ with slip boundary conditions. The initial state for the interacting blast wave problem is given by:
\begin{equation}
    \bu_0(x) = (\rho_0(x), v_0(x), p_0(x)) \eqq \begin{cases}
        (1, 0, 1000), & \, \text{ if } x \in [0,0.1],    \\
        (1, 0,   0),  & \, \text{ if } x \in (0.1, 0.9), \\
        (1, 0, 100),  & \, \text{ if } x \in [0.9,1].
    \end{cases}
\end{equation}
We set the simulation final time to $t_{\text{final}} = 0.0038$.
We show in~Figure~\ref{fig:davis-wc} the pressure and density profiles for different levels of mesh refinement. We see the solution is indeed converging and there were no issues with large pressure jumps.
\begin{figure}[!ht]
    \centering
    \includegraphics[width=0.49\linewidth, clip, trim={22, 0, 22, 2}]{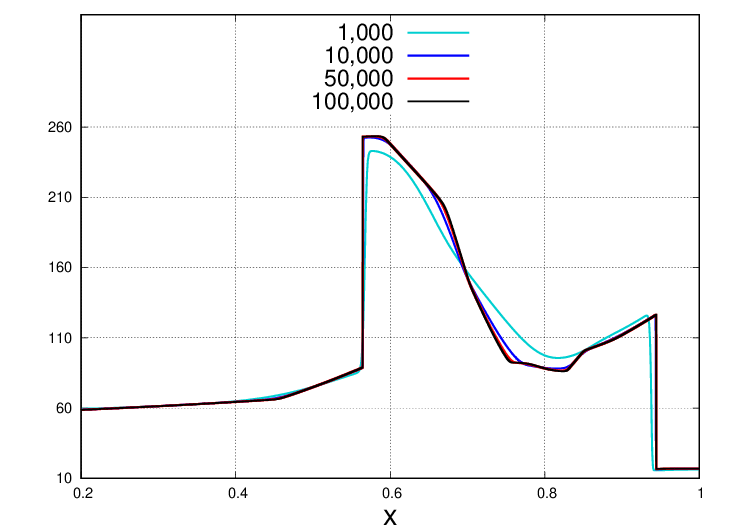}
    \includegraphics[width=0.49\linewidth, clip, trim={22, 0, 22, 2}]{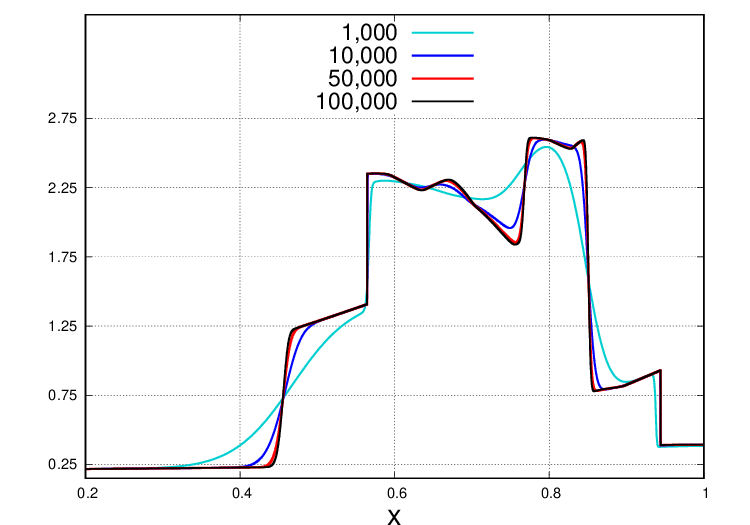}
    \caption{1D Woodward-Colella benchmark at final time $t_{\text{final}} = \num{0.038}{\mu s}$ with reactant Davis equation of state. The solution profiles shown here are pressure (left) and density (right).}
    \label{fig:davis-wc}
\end{figure}
\subsection{1D Entropy Test -- Reactant Davis}
We now illustrate numerically what it means to stay within admissible set~\eqref{eq:admissible} with a focus on the energy isentrope constraint.
For this test, we consider the Davis reactant equation of state. We recall that the energy isentrope for the Davis EOS is given by~\eqref{eq:davis_eos_e_ref}.
We consider a 1D Riemann problem where the left and right states have the same specific entropy so that they lie on the same isentrope:
\begin{equation} \label{eq:equal_entropy_states}
    (\rho_0(x), v_0(x), p_0(x)) \eqq \begin{cases}
        (2.5, -0.5, 9.980955089), & \text{ if } x \leq 5, \\
        (1, 0.5, -1.18049100646), & \text{ if } x > 5.
    \end{cases}
\end{equation}
The computational domain is given by $D = (0, 10~\SI{}{cm})$ with ``do nothing'' boundary conditions. We use a mesh composed of 12,800 elements.
We run until final time $t_{\text{final}}= 0.1$.
We plot the initial energy isentrope (black) as well as the initial specific internal energy (red circles) as a function of $\tau\eqq\frac{1}{\rho}$ in Figure~\ref{fig:davis-entropy} (left). We see that the initial energy data lies on the isentrope. We further plot the final energy state as a function of $\tau$ in the same figure (blue circles). An invariant-domain preserving method should keep the updated energy on or above the isentrope -- which we see in Figure~\ref{fig:davis-entropy}. For completeness, we also plot the initial and final energy as a function of the spatial coordinate in Figure~\ref{fig:davis-entropy} (right).

\begin{figure}
    \centering
    \includegraphics[width=0.495\linewidth,trim={12 0 12 0}, clip]{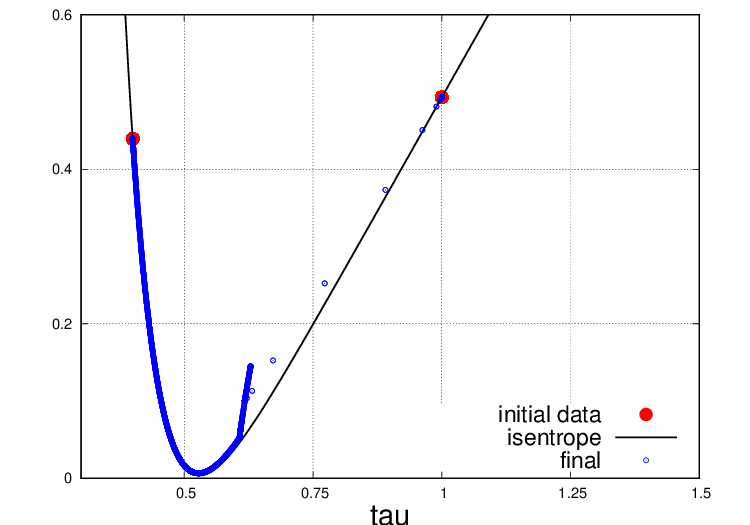}
    \includegraphics[width=0.495\linewidth,trim={12 0 12 0}, clip]{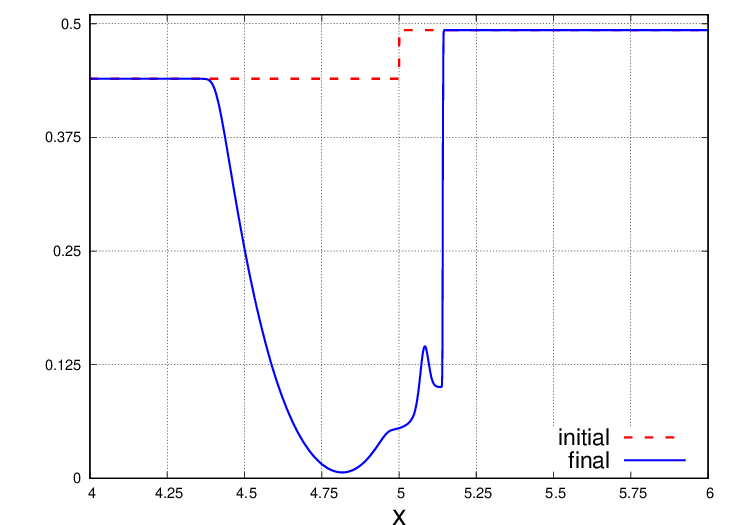}
    \caption{1D Riemann Problem - Davis EOS entropy test. The energy isentrope and initial/final energy states as a function of tau (left). The initial and final specific internal energy as a function of the spatial coordinate (right).}
    \label{fig:davis-entropy}
\end{figure}

\subsection{2D pull apart -- Simple MACAW} \label{sec:macaw-pull-2d}
We now perform a 2D extension of the 1D pull apart problem described in Section~\ref{sec:macaw-pull-1d}.
To stress the numerical method, we increase the velocity magnitude from $1.76~\SI{}{mm / \mu s}$ to $160~\SI{}{mm / \mu s}$ which leads to a Mach number of about $40.65$. The ``pulling apart'' is a consequence of defining the velocity vector to face in different directions in four separate quadrants (see: Figure~\ref{fig:macaw-2d-setup}). The set up for the problem is as follows. The initial density is to $8.93$ everywhere and the initial pressure is $0$.
The computational domain is defined to be $D = (0, \SI{2}{mm})\times(0, \SI{2}{mm})$ with ``do nothing'' boundary conditions.
Let quadrant I be the subdomain $D_{\textup{I}}=(1, \SI{2}{mm})\times(1, \SI{2}{mm})$; quadrant II the subdomain $D_{\textup{II}}=(0, \SI{1}{mm})\times(1, \SI{2}{mm})$; quadrant III the subdomain $D_{III}=(0, \SI{1}{mm})\times(0, \SI{1}{mm})$; quadrant IV the subdomain $D_{IV}=(1, \SI{2}{mm})\times(0, \SI{1}{mm})$. The velocity vector in each quadrant is defined as follows. Let $v_0\eqq160~\SI{}{mm / \micro s}$.
Then $\bv_I = (0, -v_0)^{\mathsf{T}}, \bv_{II} = (-v_0, 0)^{\mathsf{T}}, \bv_{III} = (0, -v_0)^{\mathsf{T}}, \bv_{IV} = (v_0, 0)^{\mathsf{T}}$. We set the final time to be $t_{\text{final}}=\SI{8e-3}{\micro s}$.
We perform the computations on a sequence of three meshes composed of:
$\SI[scientific-notation=false]{262144},\:\SI[scientific-notation=false]{4194304},\: \SI[scientific-notation=false]{67108864}{}$ elements to demonstrate convergence.
The numerical density profile for each mesh is shown in Figure~\ref{fig:macaw-2d}.
\begin{figure}[!ht]
    \centering
    \begin{minipage}{0.45\textwidth}
        \centering
        \includegraphics[width=\linewidth]{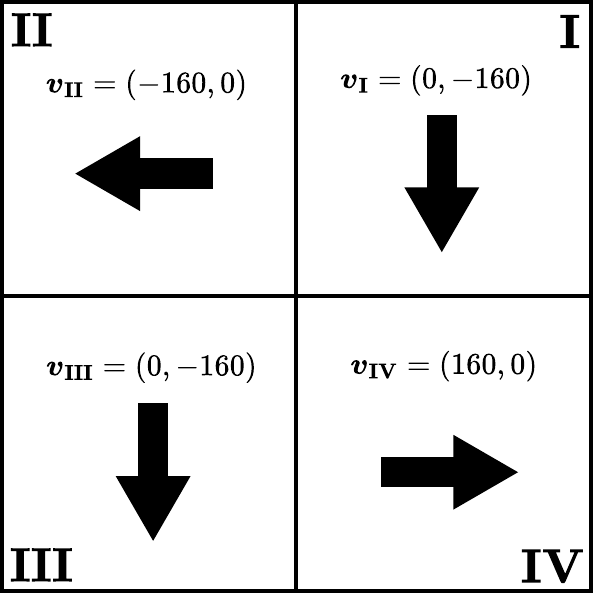}
        \caption{The initial setup for the 2D pull apart problem in Section~\ref{sec:macaw-pull-2d}.}
        \label{fig:macaw-2d-setup}
    \end{minipage} \hfill
\end{figure}

\begin{figure}[!ht]
    \centering
    \includegraphics[width=0.30\textwidth, clip, trim={0 0 0 0}]{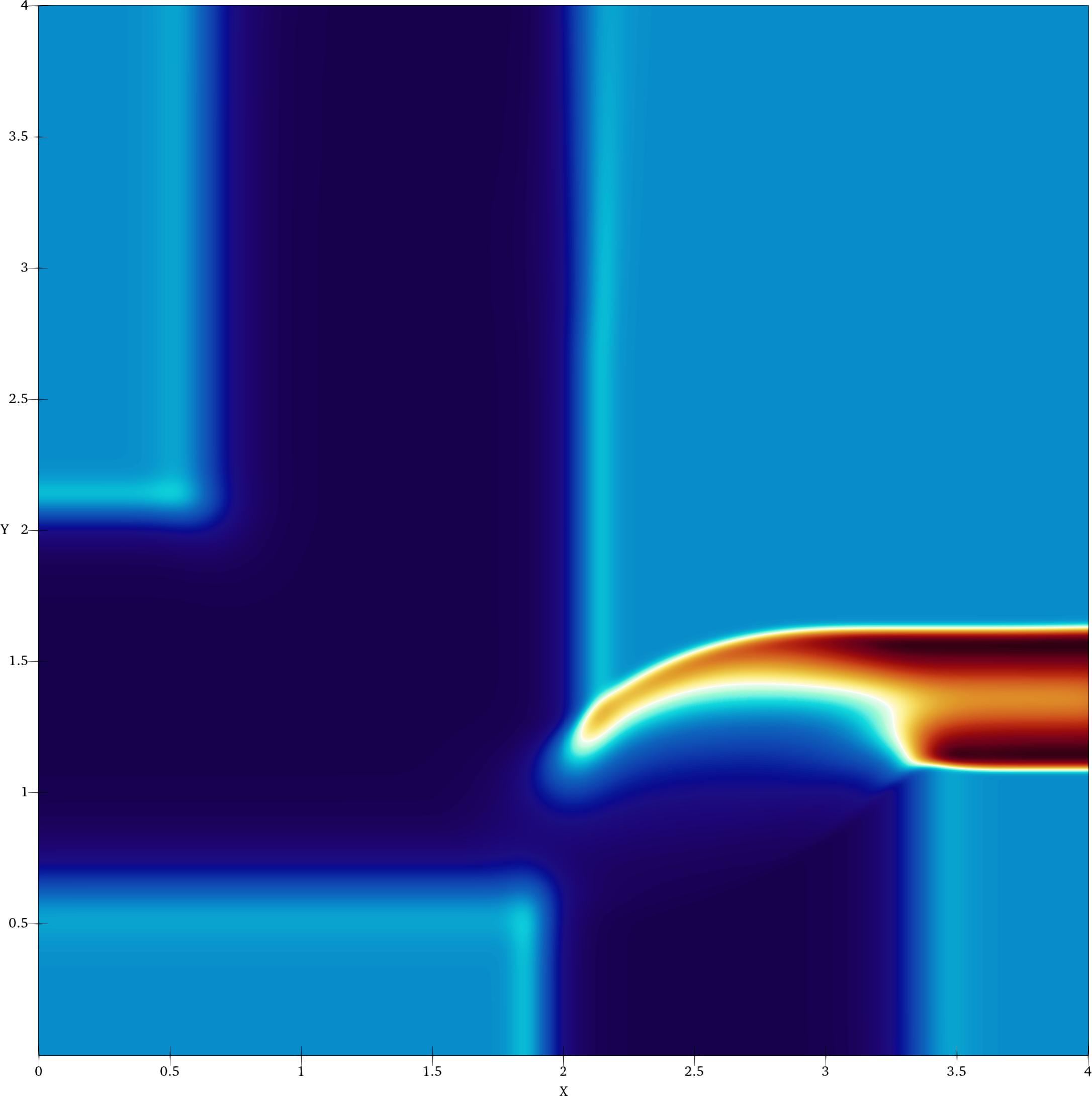}
    \includegraphics[width=0.30\textwidth, clip, trim={0 0 0 0}]{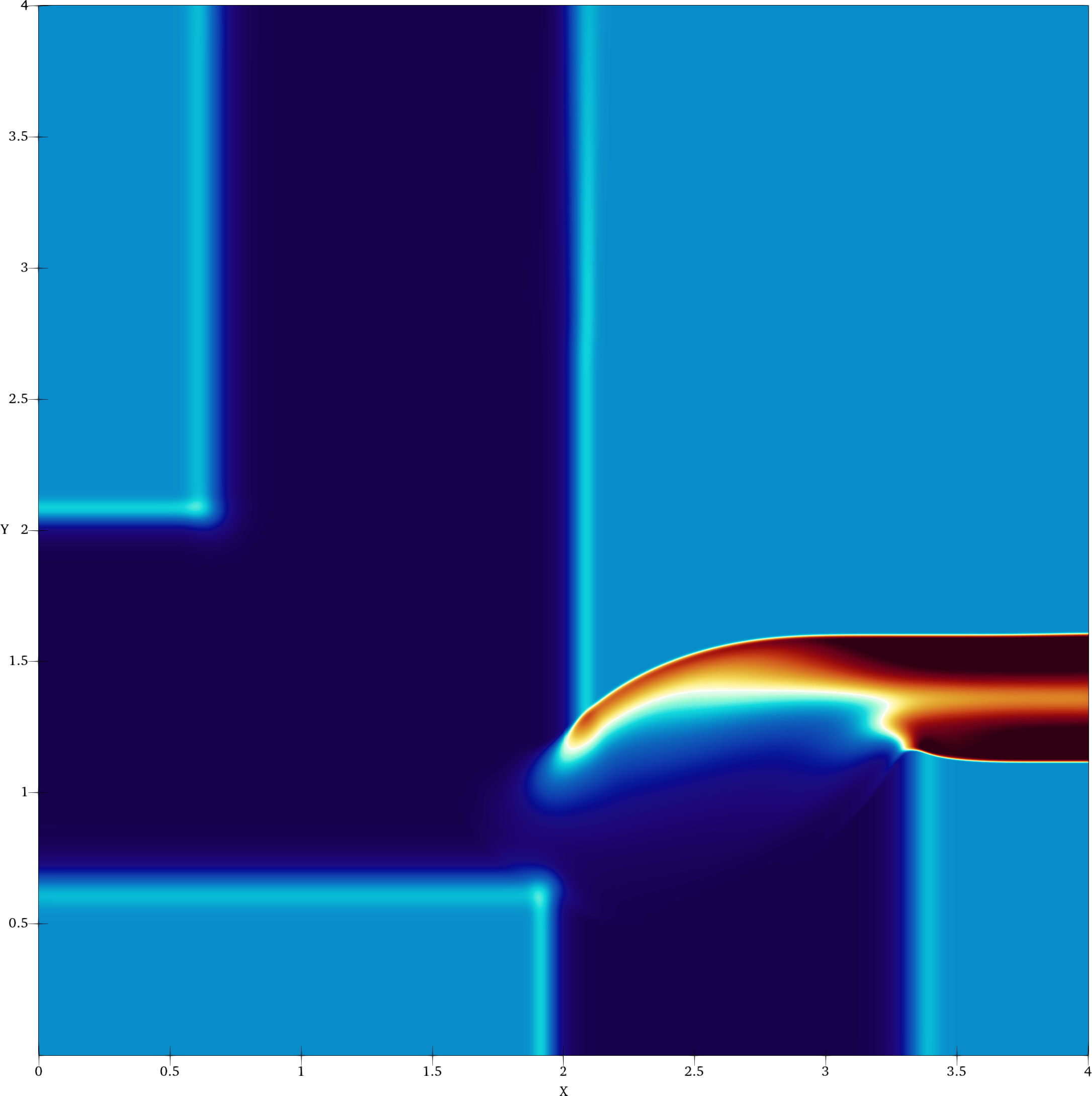}
    \includegraphics[width=0.35\textwidth, clip, trim={0 0 0 0}]{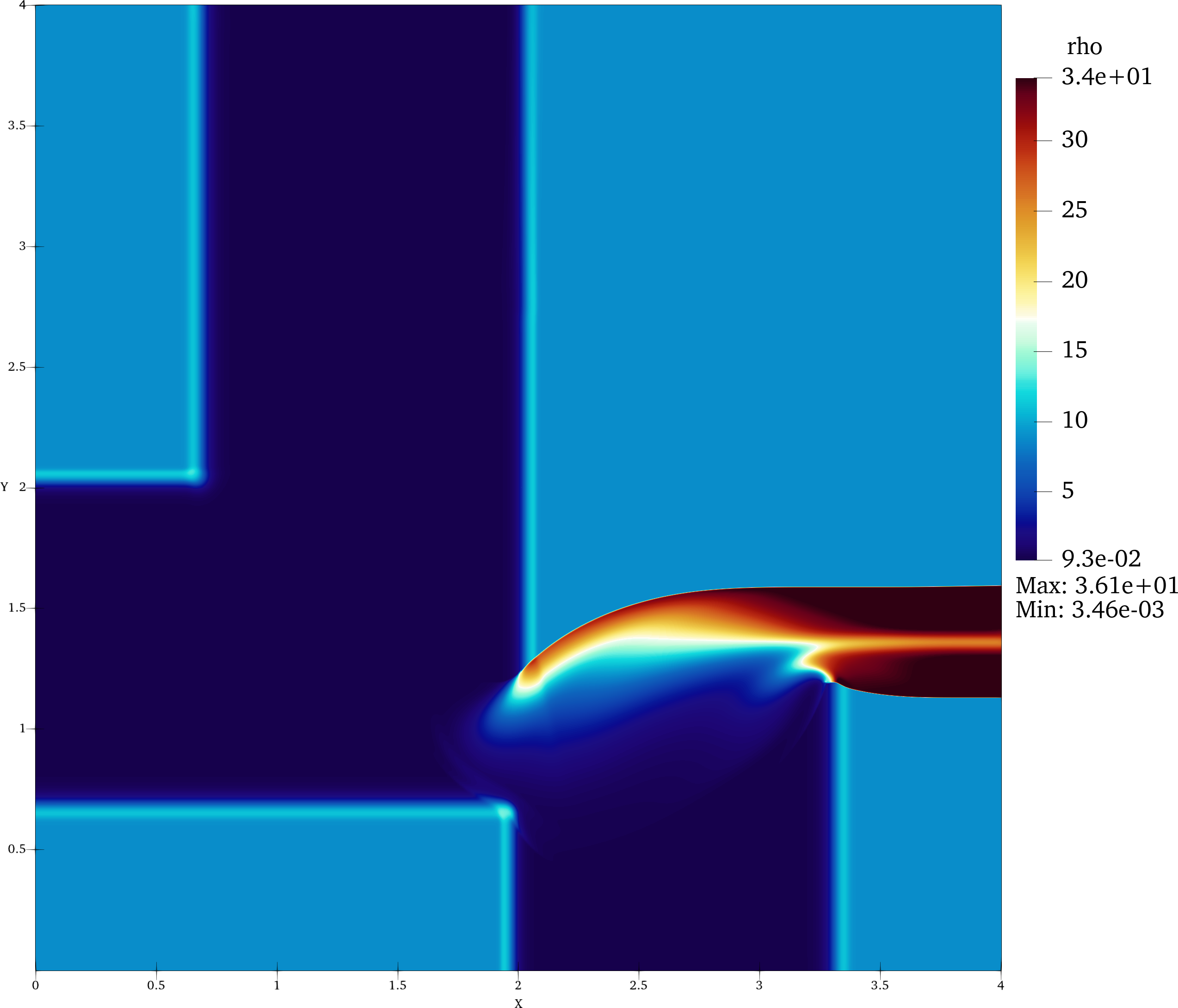}
    \caption{2D pull apart problem (density) at final time $t_{\text{final}}=\SI{8e-3}{\mu s}$ with the simple MACAW equation of state.}
    \label{fig:macaw-2d}
\end{figure}
\section{Conclusion}\label{sec:conclusion}
In this paper, we showed that any numerical method given in the form of~\eqref{eq:numerical_method} will satisfy the minimum principle on the specific entropy provided that: \textup{(i)} the equation of state satisfies Assumption~\ref{def:thermo_consistency}; \textup{(ii)}: one uses auxiliary states $\overline{\bsfU}^n_{LR}(\htlambda_{LR})$ defined by \eqref{eq:riemann_average} with the wave speed estimate, $\htlambda_{LR}$, provided in Algorithm~\ref{alg:phat_star}.
We numerically illustrated the robustness of the approach with a suite of test problems.
This work provides a robust foundation for extending the methodology to higher-order spatial approximation techniques.

\appendix
\section{Fundamental derivative for Mie-Gr{\"u}neisen} \label{app:fund_deriv}
Recall that Assumption~\ref{def:thermo_consistency} requires that $\essinf G(\tau, s) > 1$ for $\tau>0$ and $s \geq \sfs_0$ for some initial minimum specific entropy, $\sfs_0$.  
As the computation of the fundamental derivative can be tedious, we offer a convenient method for computing the essential infimum for a complete Mie-Gr{\"u}neisen type equation of state. 

We recall that pressure law for a Mie-Gr{\"u}neisen EOS is given by $\pi(\tau, e) \eqq p_{\text{ref}}(\tau) + \frac{\Gamma(\tau)}{\tau}(e - e_{\text{ref}}(\tau))$.
Here,  $\Gamma(\tau)$ is the Gr{\"u}neisen coefficient, and $p_{\text{ref}}$ and $e_{\text{ref}}$ are the reference pressure and specific internal energy curves, respectively.
It is common to define the references curves with a reference isotherm or isentrope.
In this work, we assume that the reference curves are only isentropes and use the following notation $e_s(\tau) \eqq e_{\text{ref}}(\tau)$ and $p_s(\tau) \eqq p_{\text{ref}}(\tau)$.
This is more natural since $p_s(\tau) = -e_s'(\tau)$.
This choice also yields simpler formulas for the isentropic bulk modulus and fundamental derivative.
\begin{lemma} \label{lem:mie_gru_identities}
    Let $\pi(\tau,e)$ be a Mie-Gr{\"u}neisen pressure law with reference isentropes, $e_s(\tau)$ and $p_s(\tau)$.
    Furthermore, assume sufficient differentiability of $\pi(\tau, e)$ and $\Gamma(\tau)$. Then, the isentropic bulk modulus and fundamental derivative are given by
    \begin{align}
        &\kappa(\tau, e) = K_s(\tau) + \big( \frac{\Gamma(\tau)(\Gamma(\tau) + 1)}{\tau} - \Gamma'(\tau) \big) (e - e_{s}(\tau)) \label{eq:mie_gru_bmod}, \\
        \sfG(\tau, e) &= \frac12 \Bigg[1 - \frac{
        \tau K_{s}'(\tau) - \Big( \tau \Gamma'' - 2 \Gamma' \Gamma + (\Gamma + 1) \big( \frac{\Gamma(\Gamma + 1)}{\tau} - \Gamma' \big) \Big)(e - e_{s}(\tau))
        }{
        K_{s}(\tau) + \big( \frac{\Gamma(\Gamma + 1)}{\tau} - \Gamma' \big) (e - e_s(\tau))
        } \Bigg], \label{eq:mie_gru_fundamental_deriv}
    \end{align}
    respectively, where $K_s(\tau) \eqq -\tau p_s'(\tau)$.
\end{lemma}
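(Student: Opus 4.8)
The plan is to obtain both identities by direct differentiation of the Mie--Gr\"uneisen pressure law, using only the thermodynamic relations already recorded in Remark~\ref{rem:thermo_relations} together with the change-of-variables formula \eqref{eq:G_of_tau_e}. The single structural ingredient needed is the rule relating derivatives of $p(\tau,s)$ to those of $\pi(\tau,e)$: since $e(\tau,s)$ satisfies $\partial_\tau e(\tau,s)=-p$ at constant $s$ (from $\diff e=-p\,\diff\tau+T\,\diff s$), the chain rule applied to $p(\tau,s)=\pi(\tau,e(\tau,s))$ gives $\partial_\tau p(\tau,s)=\partial_\tau\pi(\tau,e)-\pi(\tau,e)\,\partial_e\pi(\tau,e)$, and therefore $\kappa(\tau,e)=-\tau\big(\partial_\tau\pi(\tau,e)-\pi(\tau,e)\,\partial_e\pi(\tau,e)\big)$ by the definition $K(\tau,s)=-\tau\partial_\tau p(\tau,s)$.

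First I would compute, directly from $\pi(\tau,e)=p_s(\tau)+\tfrac{\Gamma(\tau)}{\tau}(e-e_s(\tau))$, that $\partial_e\pi=\tfrac{\Gamma(\tau)}{\tau}$ and, using $e_s'(\tau)=-p_s(\tau)$, that $\partial_\tau\pi=p_s'+\big(\tfrac{\Gamma'}{\tau}-\tfrac{\Gamma}{\tau^2}\big)(e-e_s)+\tfrac{\Gamma}{\tau}p_s$. Here one must keep in mind that $e_s(\tau)$ depends on $\tau$ alone, so $\partial_e(e-e_s(\tau))=1$ and $\partial_\tau(e-e_s(\tau))=p_s(\tau)$. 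Substituting into $\kappa=-\tau(\partial_\tau\pi-\pi\,\partial_e\pi)$, the terms proportional to $p_s$ cancel, and recognizing $K_s(\tau)=-\tau p_s'(\tau)$ leaves $\kappa(\tau,e)=K_s(\tau)+\big(\tfrac{\Gamma(\Gamma+1)}{\tau}-\Gamma'\big)(e-e_s(\tau))$, which is \eqref{eq:mie_gru_bmod}.

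For the fundamental derivative I would feed this $\kappa$ into \eqref{eq:G_of_tau_e} (with $p(\tau,s(\tau,e))=\pi(\tau,e)$). Writing $\kappa(\tau,e)=K_s(\tau)+M(\tau)(e-e_s(\tau))$ with $M(\tau)\eqq\tfrac{\Gamma(\Gamma+1)}{\tau}-\Gamma'$, one has $\partial_e\kappa=M$ and $\partial_\tau\kappa=K_s'+M'(e-e_s)+Mp_s$, so that $\partial_\tau\kappa-\pi\,\partial_e\kappa=K_s'+\big(M'-\tfrac{\Gamma}{\tau}M\big)(e-e_s)$ after the $p_s$ terms combine. Plugging into \eqref{eq:G_of_tau_e} yields $\sfG(\tau,e)=\tfrac12\Big(1-\tfrac{\tau K_s'+(\tau M'-\Gamma M)(e-e_s)}{K_s+M(e-e_s)}\Big)$, and it remains only to verify the algebraic identity $\tau M'(\tau)-\Gamma(\tau)M(\tau)=-\big(\tau\Gamma''-2\Gamma'\Gamma+(\Gamma+1)M\big)$; expanding $M'=\tfrac{(2\Gamma+1)\Gamma'}{\tau}-\tfrac{\Gamma(\Gamma+1)}{\tau^2}-\Gamma''$ and collecting, both sides reduce (up to sign) to $\tau\Gamma''-(3\Gamma+1)\Gamma'+\tfrac{\Gamma(\Gamma+1)^2}{\tau}$, which matches the bracketed coefficient in \eqref{eq:mie_gru_fundamental_deriv} and closes the proof. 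I expect the only real obstacle to be bookkeeping: tracking which quantities depend on $\tau$ alone versus on $(\tau,e)$ when applying the chain rule, and carrying the $\Gamma,\Gamma',\Gamma''$ terms through the last simplification without sign errors. No inequality or limiting argument is involved — the statement is an identity valid wherever $\pi$ and $\Gamma$ are twice differentiable, exactly the stated hypothesis.
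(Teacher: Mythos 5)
Your computation is correct, and it is the intended argument: the paper states Lemma~\ref{lem:mie_gru_identities} without proof, and the natural derivation is exactly what you do — use $\partial_\tau p(\tau,s)=\partial_\tau\pi-\pi\,\partial_e\pi$ (from $\partial_\tau e|_s=-p$) to get \eqref{eq:mie_gru_bmod}, then feed the resulting $\kappa=K_s+M(e-e_s)$ into \eqref{eq:G_of_tau_e}. I verified the final bookkeeping identity $\tau M'-\Gamma M=-\big(\tau\Gamma''-2\Gamma'\Gamma+(\Gamma+1)M\big)$; both sides equal $(3\Gamma+1)\Gamma'-\tfrac{\Gamma(\Gamma+1)^2}{\tau}-\tau\Gamma''$, so the proof is complete as written.
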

\begin{proof}
    Since $\pi(\tau, e(\tau,s)) = p(\tau, s)$, the derivatives as functions of $\tau$ and $e$ are determined by,
    \begin{align*}
        \kappa(\tau, e) &= K(\tau, s) = -\tau \partial_\tau p(\tau, s) = -\tau \partial_\tau \pi(\tau, e(\tau, s)) \\
        &= -\tau\big(\partial_\tau \pi(\tau, e) + \partial_e \pi(\tau, e) \partial_\tau e(\tau, s)\big) \\
        &= -\tau\big(\partial_\tau \pi(\tau, e) - \pi(\tau, e) \partial_e \pi(\tau, e)\big).
    \end{align*}
    Computing the derivative of $\pi$ and with respect to $\tau$ and $e$, one arrives at the result \eqref{eq:mie_gru_bmod}.
    Next, note that $\partial^2_\tau p(\tau, s) = \partial_\tau (-\kappa(\tau, e(\tau,s)) / \tau)$.
    Whence we have,
    \begin{equation*}
        \partial^2_\tau p(\tau, s) = - \frac{\tau \big(\partial_\tau \kappa(\tau, e) - p(\tau, s(\tau, e)) \partial_e \kappa(\tau, e)\big) - \kappa(\tau, e)}{\tau^2}.
    \end{equation*}
    Recall that $\sfG(\tau, e) \eqq G(\tau, s(\tau,e)) = -\frac12 \tau \frac{\partial^2_\tau p(\tau, s(\tau,e))}{\partial_\tau p(\tau, s(\tau,e))}$.
    Substituting into the definition of $\sfG$, we have,
    \begin{equation} \label{eq:G_of_tau_e}
        \sfG(\tau, e) = \frac12 \Big(1 - \frac{\tau}{\kappa(\tau, e)} \big( \partial_\tau \kappa(\tau, e) -\pi(\tau, e) \partial_e \kappa(\tau, e) \big) \Big).
    \end{equation}
    Similarly, computation of $\partial_\tau \kappa$ and $\partial_e \kappa$ provides us with formula \eqref{eq:mie_gru_fundamental_deriv}.
\end{proof}

\begin{proposition} \label{prop:mie_gru_fund_deriv}
    Assume the same conditions hold as in Lemma~\ref{lem:mie_gru_identities}. Additionally, assume: \textup{(i)} $\Gamma(\tau)$ is a positive decreasing and convex function; \textup{(ii)} $K_s(\tau)$ is a decreasing positive function; \textup{(iii)} $e \geq e_s(\tau)$.
    Then, the fundamental derivative is bounded below by
    \begin{equation}
        \sfG(\tau, e) \geq \frac12 \bigg[1 + \min\Big( \inf_{\tau > 0} \Big(\frac{-\tau K'_s(\tau)}{K_s(\tau)} \Big), \inf_\tau \Gamma(\tau) + 1 \Big) \bigg] =: g_{\min}.
    \end{equation}
\end{proposition}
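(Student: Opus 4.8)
The plan is to work directly from the closed-form expression \eqref{eq:mie_gru_fundamental_deriv} and write $\sfG(\tau,e) = \tfrac12\big(1 - N/D\big)$, where $N$ is the numerator there and $D = \kappa(\tau,e)$ is the denominator, i.e. the isentropic bulk modulus given by \eqref{eq:mie_gru_bmod}. The first step is to record that $D > 0$: from \eqref{eq:mie_gru_bmod}, $D = K_s(\tau) + \big(\tfrac{\Gamma(\Gamma+1)}{\tau} - \Gamma'\big)(e - e_s(\tau))$, and under hypotheses \textup{(i)}--\textup{(iii)} each summand is nonnegative (using $\Gamma>0$, $\Gamma+1>0$, $\tau>0$, $-\Gamma'\geq 0$, $e-e_s\geq 0$) while $K_s(\tau)>0$, so in fact $D \geq K_s(\tau) > 0$. (This incidentally confirms the EOS satisfies Assumption~\ref{def:thermo_consistency}\textup{(ii)}.)

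Next set $\mu \eqq \min\big(\inf_\tau\big(-\tau K_s'(\tau)/K_s(\tau)\big),\ \inf_\tau\Gamma(\tau)+1\big) = 2g_{\min}-1$. Since $D>0$, the claimed bound $\sfG(\tau,e)\geq g_{\min} = \tfrac12(1+\mu)$ is equivalent to $N + \mu D \leq 0$. Writing $x \eqq e - e_s(\tau) \geq 0$, one checks that $N + \mu D$ is \emph{affine in $x$}: its value at $x=0$ is $\tau K_s'(\tau) + \mu K_s(\tau)$, and its slope in $x$ is $-\tau\Gamma'' + 2\Gamma\Gamma' + (\mu - \Gamma - 1)\big(\tfrac{\Gamma(\Gamma+1)}{\tau} - \Gamma'\big)$. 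An affine function on $[0,\infty)$ is $\leq 0$ throughout iff its value at $x=0$ is $\leq 0$ and its slope is $\leq 0$, so the proof reduces to verifying these two facts.

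For the value at $x=0$: by definition $\mu \leq \inf_\tau\big(-\tau K_s'/K_s\big) \leq -\tau K_s'(\tau)/K_s(\tau)$ for every $\tau$, and multiplying through by $K_s(\tau)>0$ gives $\tau K_s'(\tau) + \mu K_s(\tau) \leq 0$. For the slope: $-\tau\Gamma''\leq 0$ by convexity of $\Gamma$; $2\Gamma\Gamma'\leq 0$ since $\Gamma>0$ and $\Gamma'\leq 0$; and in the remaining term $\tfrac{\Gamma(\Gamma+1)}{\tau} - \Gamma' \geq 0$ while $\mu - \Gamma - 1 \leq \inf_\tau\Gamma + 1 - \Gamma - 1 \leq 0$, so that product is $\leq 0$ as well. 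Hence the slope is $\leq 0$, so $N + \mu D \leq 0$ on $[0,\infty)$, giving $\sfG(\tau,e)\geq g_{\min}$.

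Given Lemma~\ref{lem:mie_gru_identities}, all of this is elementary sign-chasing; the only genuine idea is the reduction to an affine-in-$x$ inequality, which also explains why the bound necessarily involves a minimum of two quantities — controlling the endpoint $x=0$ forces $\mu \leq \inf_\tau(-\tau K_s'/K_s)$, while controlling the slope forces $\mu \leq \inf_\tau\Gamma + 1$. I expect the only delicate bookkeeping to be keeping the signs of the three slope terms straight, and the one place where the full strength of the hypotheses is used (convexity of $\Gamma$ for the $-\tau\Gamma''$ term, monotonicity of $\Gamma$ for $2\Gamma\Gamma'$ and for $-\Gamma'\geq 0$, and $e\geq e_s$ for the affine-function argument on $[0,\infty)$).
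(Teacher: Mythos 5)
Your proof is correct. It is a repackaging of the same elementary sign-chasing the paper does, but the mechanism is genuinely different and worth noting. The paper first discards the two favorably signed numerator terms ($\tau\Gamma''\ge 0$ and $-2\Gamma\Gamma'\ge 0$) to get the pointwise bound
$\sfG \ge \tfrac12\bigl[1+\tfrac{a+b}{c+d}\bigr]$ with $a=-\tau K_s'$, $c=K_s$, $b=(\Gamma+1)d$, $d=\bigl(\tfrac{\Gamma(\Gamma+1)}{\tau}-\Gamma'\bigr)(e-e_s)$, and then invokes the mediant inequality $\tfrac{a+b}{c+d}\ge\min(a/c,\,b/d)$ before taking infima; because that inequality needs $b,d>0$, the paper treats $e=e_s(\tau)$ as a separate (trivial) case. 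You instead fix the target constant $\mu=2g_{\min}-1$ up front, observe that the claim is equivalent to $N+\mu D\le 0$ with $N,D$ affine in $x=e-e_s\ge 0$, and verify nonpositivity of the intercept and the slope. The two routes consume exactly the same hypotheses in the same places (convexity of $\Gamma$ for $-\tau\Gamma''$, monotonicity for $2\Gamma\Gamma'$ and $-\Gamma'\ge0$, monotonicity of $K_s$ for the intercept, $e\ge e_s$ for restricting to $x\ge0$), but yours handles $x=0$ uniformly, avoids the mediant inequality, and makes structurally transparent why the bound is a minimum of two quantities --- the intercept forces $\mu\le\inf_\tau(-\tau K_s'/K_s)$ and the slope forces $\mu\le\inf_\tau\Gamma+1$. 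The only prerequisite you must (and do) establish separately is $D>0$, which the paper's pointwise formulation gets implicitly from the positivity of $a,b,c,d$.
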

\begin{proof}
    If the specific internal energy is on the reference isentrope; that is, $e = e_s(\tau)$, then $\sfG(\tau, e_s(\tau)) = \frac12\big(1 - \tau \frac{K'_s(\tau)}{K_s(\tau)}\big) \geq g_{\min}$.
    In the case when $e > e_s(\tau)$, from the assumptions on the Gr{\"u}neisen coefficient, we have that $\Gamma(\tau) > 0$, $\Gamma'(\tau) \leq 0$, and $\Gamma''(\tau) \geq 0$ for all $\tau > 0$.
    Therefore, a lower bound on \eqref{eq:mie_gru_fundamental_deriv} is given by
    \begin{equation}
        \sfG(\tau, e) \geq \frac12 \Bigg[1 + \frac{
        -\tau K_{s}'(\tau) + (\Gamma(\tau)+1)\big(\frac{(\Gamma(\tau) + 1)\Gamma(\tau)}{\tau} - \Gamma'(\tau) \big)(e - e_{s}(\tau))
        }{
        K_{s}(\tau) + \big(\frac{(\Gamma(\tau) + 1)\Gamma(\tau)}{\tau} - \Gamma'(\tau) \big)(e - e_s(\tau))
        } \Bigg].
    \end{equation}
    Using the general inequality, $\frac{a + b}{c + d} \geq \min\big(\frac{a}{c}, \frac{b}{d}\big)$ when $a, b, c, d > 0$, 
    we have that 
    \begin{equation}
        \sfG(\tau, e) \geq \frac12 \Bigg[1 + \min\Big(\frac{
        -\tau K_{s}'(\tau)}{K_s(\tau)}, \Gamma(\tau) + 1 \Big) \Bigg].
    \end{equation}
    Hence $\sfG(\tau, e) \geq g_{\min}$.
\end{proof}

\section{Lower bound estimate} \label{app:f_shock_bound}
We show that the shock curve bounds the expansion curve from above for $p > p_Z$.

\begin{lemma} \label{lem:f_z_inequal}
    For $p > p_Z$, we have that $f_Z^{\textup{shock}}(p) \geq f_Z^{\textup{exp}}(p)$.
\end{lemma}

\begin{proof}
    Since $f_Z^{\text{shock}}(p_Z) = f_Z^{\text{exp}}(p_Z)$, we only 
need to show that $\frac{d}{dp}f_Z^{\text{shock}}(p) \geq \frac{d}{dp}f_Z^{\text{exp}}(p)$ 
for all $p > p_Z$.
So we have,
\begin{align*}
    \frac{d}{dp}f_Z^{\text{shock}}(p) &= \frac{(p + p_{\infty,Z}) + (p_Z + p_{\infty,Z})}{2\sqrt{\rho_Z}(p + p_{\infty,Z})^{3/2}}, \\
    \frac{d}{dp}f_Z^{\text{exp}}(p) &= \frac{c_Z}{p + p_{\infty,Z}}.
\end{align*}
Letting $x \eqq p + p_{\infty,Z}$ and $a \eqq p_Z + p_{\infty,Z}$, the proof is then equivalent to showing that,
\begin{equation*}
    f(x) = \frac{x + a}{2\sqrt{ax}} \geq 1.
\end{equation*}
where we have used that $\sqrt{\rho_Z c_Z} = \sqrt{p_Z + p_{\infty,Z}}$.
It is seen that $f(x) = \frac{x+a}{2\sqrt{ax}}$ is an increasing function of $x$ (for $x \in [a, \infty)$) and $f(a) = 1$.
Hence the result is proven.
\end{proof}

\section*{Acknowledgments}
The authors thank Professors Jean-Luc Guermond and Bojan Popov at Texas A\&M University for their fruitful discussions and help provided throughout the project.
The authors also thank their colleagues from Los Alamos National Laboratory: Tariq Aslam, Eduardo Lozano, Joshua McConnell, and Clell Solomon for their insightful discussions on equations of state and continual support.
The authors further thank Tariq for the 2D set up outlined in Section~\ref{sec:macaw-pull-2d} of the manuscript.

\section*{Data availability}
Any code or data used for this article can be made available from the authors upon reasonable request.



\bibliographystyle{abbrvnat}
\bibliography{ref}

\begin{thebibliography}{50}
\providecommand{\natexlab}[1]{#1}
\providecommand{\url}[1]{\texttt{#1}}
\expandafter\ifx\csname urlstyle\endcsname\relax
  \providecommand{\doi}[1]{doi: #1}\else
  \providecommand{\doi}{doi: \begingroup \urlstyle{rm}\Url}\fi

\bibitem[Abgrall and Karni(2001)]{abgrall2001computations}
R.~Abgrall and S.~Karni.
\newblock Computations of compressible multifluids.
\newblock \emph{Journal of computational physics}, 169\penalty0 (2):\penalty0 594--623, 2001.

\bibitem[Aiello et~al.(2025)Aiello, De~Michele, and Coppola]{aiello2025entropy}
A.~Aiello, C.~De~Michele, and G.~Coppola.
\newblock Entropy conservative discretization of compressible {E}uler equations with an arbitrary equation of state.
\newblock \emph{Journal of Computational Physics}, page 113836, 2025.

\bibitem[Arndt et~al.(2023)Arndt, Bangerth, Bergbauer, Feder, Fehling, Heinz, Heister, Heltai, Kronbichler, Maier, Munch, Pelteret, Turcksin, Wells, and Zampini]{dealII95}
D.~Arndt, W.~Bangerth, M.~Bergbauer, M.~Feder, M.~Fehling, J.~Heinz, T.~Heister, L.~Heltai, M.~Kronbichler, M.~Maier, P.~Munch, J.-P. Pelteret, B.~Turcksin, D.~Wells, and S.~Zampini.
\newblock The \texttt{deal.II} library, version 9.5.
\newblock \emph{Journal of Numerical Mathematics}, 31\penalty0 (3):\penalty0 231--246, 2023.

\bibitem[Aslam and Lozano(2024)]{lozanoSimpleMACAW}
T.~D. Aslam and E.~Lozano.
\newblock A simple {MACAW} equation of state.
\newblock Technical report, Los Alamos National Laboratory (LANL), Los Alamos, NM (United States), 12 2024.

\bibitem[Berthon et~al.(2012)Berthon, Dubroca, and Sangam]{berthon2012local}
C.~Berthon, B.~Dubroca, and A.~Sangam.
\newblock A local entropy minimum principle for deriving entropy preserving schemes.
\newblock \emph{SIAM Journal on Numerical Analysis}, 50\penalty0 (2):\penalty0 468--491, 2012.

\bibitem[Bethe(1942)]{bethe}
H.~Bethe.
\newblock The theory of shock waves for an arbitrary equation of state.
\newblock Technical report, Office of Scientific Research and Development, 1942.

\bibitem[Chueh et~al.(1977)Chueh, Conley, and Smoller]{chueh1977positively}
K.~N. Chueh, C.~C. Conley, and J.~A. Smoller.
\newblock Positively invariant regions for systems of nonlinear diffusion equations.
\newblock \emph{Indiana University Mathematics Journal}, 26\penalty0 (2):\penalty0 373--392, 1977.

\bibitem[Clayton et~al.(2022)Clayton, Guermond, and Popov]{clayton_2022}
B.~Clayton, J.-L. Guermond, and B.~Popov.
\newblock Invariant domain-preserving approximations for the {E}uler equations with tabulated equation of state.
\newblock \emph{SIAM Journal on Scientific Computing}, 44\penalty0 (1):\penalty0 A444--A470, 2022.
\newblock \doi{10.1137/21M1414097}.

\bibitem[Clayton et~al.(2023)Clayton, Guermond, Maier, Popov, and Tovar]{guermond2023}
B.~Clayton, J.-L. Guermond, M.~Maier, B.~Popov, and E.~J. Tovar.
\newblock Robust second-order approximation of the compressible {E}uler equations with an arbitrary equation of state.
\newblock \emph{Journal of Computational Physics}, 478:\penalty0 111926, 2023.

\bibitem[Clayton(2023)]{clayton2023robust}
B.~G. Clayton.
\newblock \emph{A Robust Second Order Invariant-Domain Preserving Approximation of the Compressible {E}uler Equations With an Arbitrary Equation of State}.
\newblock PhD thesis, Texas A\&M University, 2023.

\bibitem[Colella and Glaz(1985)]{colella1985efficient}
P.~Colella and H.~M. Glaz.
\newblock Efficient solution algorithms for the {R}iemann problem for real gases.
\newblock \emph{Journal of Computational Physics}, 59\penalty0 (2):\penalty0 264--289, 1985.

\bibitem[Colonna et~al.(2009)Colonna, Nannan, Guardone, and van~der Stelt]{colonna2009computation}
P.~Colonna, N.~R. Nannan, A.~Guardone, and T.~P. van~der Stelt.
\newblock On the computation of the fundamental derivative of gas dynamics using equations of state.
\newblock \emph{Fluid phase equilibria}, 286\penalty0 (1):\penalty0 43--54, 2009.

\bibitem[Coquel and Perthame(1998)]{coquel1998relaxation}
F.~Coquel and B.~t. Perthame.
\newblock Relaxation of energy and approximate {R}iemann solvers for general pressure laws in fluid dynamics.
\newblock \emph{SIAM Journal on Numerical Analysis}, 35\penalty0 (6):\penalty0 2223--2249, 1998.

\bibitem[Davis(1993)]{davis1993equation}
W.~C. Davis.
\newblock Equation of state for detonation products.
\newblock Technical report, Los Alamos National Laboratory (LANL), Los Alamos, NM (United States), 1993.

\bibitem[Davis(2000)]{davis2000complete}
W.~C. Davis.
\newblock Complete equation of state for unreacted solid explosive.
\newblock \emph{Combustion and Flame}, 120\penalty0 (3):\penalty0 399--403, 2000.

\bibitem[Dilts(2006)]{dilts2006consistent}
G.~A. Dilts.
\newblock Consistent thermodynamic derivative estimates for tabular equations of state.
\newblock \emph{Physical Review E—Statistical, Nonlinear, and Soft Matter Physics}, 73\penalty0 (6):\penalty0 066704, 2006.

\bibitem[Ern and Guermond(2022)]{Ern_Guermond_2022}
A.~Ern and J.-L. Guermond.
\newblock Invariant-domain-preserving high-order time stepping: I. explicit {R}unge--{K}utta schemes.
\newblock \emph{SIAM Journal on Scientific Computing}, 44\penalty0 (5):\penalty0 A3366--A3392, 2022.
\newblock \doi{10.1137/21M145793X}.
\newblock URL \url{https://doi.org/10.1137/21M145793X}.

\bibitem[Frid(2001)]{frid2001maps}
H.~Frid.
\newblock Maps of convex sets and invariant regions for finite-difference systems of conservation laws.
\newblock \emph{Archive for rational mechanics and analysis}, 160:\penalty0 245--269, 2001.

\bibitem[Godlewski and Raviart(2021)]{godlewski_raviart_2021}
E.~Godlewski and P.-A. Raviart.
\newblock \emph{Numerical Approximation of Hyperbolic Systems of Conservation Laws}.
\newblock Springer, 2021.

\bibitem[Guardone et~al.(2024)Guardone, Colonna, Pini, and Spinelli]{guardone2024nonideal}
A.~Guardone, P.~Colonna, M.~Pini, and A.~Spinelli.
\newblock Nonideal compressible fluid dynamics of dense vapors and supercritical fluids.
\newblock \emph{Annual Review of Fluid Mechanics}, 56\penalty0 (1):\penalty0 241--269, 2024.

\bibitem[Guermond and Popov(2016{\natexlab{a}})]{Guermond_fast}
J.-L. Guermond and B.~Popov.
\newblock Fast estimation from above of the maximum wave speed in the {R}iemann problem for the {E}uler equations.
\newblock \emph{Journal of Computational Physics}, 321:\penalty0 908--926, 2016{\natexlab{a}}.
\newblock ISSN 0021-9991.
\newblock \doi{https://doi.org/10.1016/j.jcp.2016.05.054}.
\newblock URL \url{https://www.sciencedirect.com/science/article/pii/S0021999116301991}.

\bibitem[Guermond and Popov(2016{\natexlab{b}})]{guermond2016}
J.-L. Guermond and B.~Popov.
\newblock Invariant domains and first-order continuous finite element approximation for hyperbolic systems.
\newblock \emph{SIAM Journal on Numerical Analysis}, 54\penalty0 (4):\penalty0 2466--2489, 2016{\natexlab{b}}.

\bibitem[Guermond et~al.(2018)Guermond, Nazarov, Popov, and Tomas]{Guermond_Nazarov_Popov_Tomas_SISC_2019}
J.-L. Guermond, M.~Nazarov, B.~Popov, and I.~Tomas.
\newblock Second-order invariant domain preserving approximation of the {E}uler equations using convex limiting.
\newblock \emph{SIAM Journal on Scientific Computing}, 40\penalty0 (5):\penalty0 A3211--A3239, 2018.

\bibitem[Guermond et~al.(2019)Guermond, Popov, and Tomas]{guermond2019}
J.-L. Guermond, B.~Popov, and I.~Tomas.
\newblock Invariant domain preserving discretization-independent schemes and convex limiting for hyperbolic systems.
\newblock \emph{Computer Methods in Applied Mechanics and Engineering}, 347:\penalty0 143--175, 2019.

\bibitem[Guermond et~al.(2022)Guermond, Kronbichler, Maier, Popov, and Tomas]{ryujin-2021-3}
J.-L. Guermond, M.~Kronbichler, M.~Maier, B.~Popov, and I.~Tomas.
\newblock On the implementation of a robust and efficient finite element-based parallel solver for the compressible {N}avier-{S}tokes equations.
\newblock \emph{Computer Methods in Applied Mechanics and Engineering}, 389:\penalty0 114250, 2022.
\newblock \doi{10.1016/j.cma.2021.114250}.
\newblock URL \url{https://arxiv.org/abs/2106.02159}.

\bibitem[Harten et~al.(1983)Harten, Lax, and Leer]{harten1983upstream}
A.~Harten, P.~D. Lax, and B.~v. Leer.
\newblock On upstream differencing and {G}odunov-type schemes for hyperbolic conservation laws.
\newblock \emph{SIAM review}, 25\penalty0 (1):\penalty0 35--61, 1983.

\bibitem[Ivings et~al.(1998)Ivings, Causon, and Toro]{ivings1998riemann}
M.~Ivings, D.~Causon, and E.~F. Toro.
\newblock On {R}iemann solvers for compressible liquids.
\newblock \emph{International Journal for Numerical Methods in Fluids}, 28\penalty0 (3):\penalty0 395--418, 1998.

\bibitem[Jadrich et~al.(2023)Jadrich, Lindquist, Leiding, and Aslam]{jadrich2023uncertainty}
R.~B. Jadrich, B.~A. Lindquist, J.~A. Leiding, and T.~D. Aslam.
\newblock Uncertainty quantified reactant and product equation of state for {C}omposition {B}.
\newblock In \emph{AIP Conference Proceedings}, volume 2844. AIP Publishing, 2023.

\bibitem[Khobalatte and Perthame(1994)]{khobalatte1994maximum}
B.~Khobalatte and B.~Perthame.
\newblock Maximum principle on the entropy and second-order kinetic schemes.
\newblock \emph{Mathematics of Computation}, 62\penalty0 (205):\penalty0 119--131, 1994.

\bibitem[Koroleva and Tenenev(2024)]{koroleva2024approximate}
M.~R. Koroleva and V.~A. Tenenev.
\newblock Approximate {R}iemann solvers for the {S}oave {R}edlich {K}wong equation of state.
\newblock \emph{Russian Journal of Nonlinear Dynamics}, 20\penalty0 (3):\penalty0 345--359, 2024.

\bibitem[Kr{\"o}ner et~al.(2008)Kr{\"o}ner, LeFloch, and Thanh]{kroner2008minimum}
D.~Kr{\"o}ner, P.~G. LeFloch, and M.-D. Thanh.
\newblock The minimum entropy principle for compressible fluid flows in a nozzle with discontinuous cross-section.
\newblock \emph{ESAIM: Mathematical Modelling and Numerical Analysis}, 42\penalty0 (3):\penalty0 425--442, 2008.

\bibitem[Lax(1957)]{Lax_1957}
P.~D. Lax.
\newblock Hyperbolic systems of conservation laws {II}.
\newblock \emph{Communications on pure and applied mathematics}, 10\penalty0 (4):\penalty0 537--566, 1957.
\newblock ISSN 0010-3640.

\bibitem[Lozano et~al.(2023)Lozano, Cawkwell, and Aslam]{lozano2023analytic}
E.~Lozano, M.~J. Cawkwell, and T.~D. Aslam.
\newblock An analytic and complete equation of state for condensed phase materials.
\newblock \emph{Journal of Applied Physics}, 134\penalty0 (12), 2023.

\bibitem[Lv and Ihme(2015)]{lv2015entropy}
Y.~Lv and M.~Ihme.
\newblock Entropy-bounded discontinuous {G}alerkin scheme for {E}uler equations.
\newblock \emph{Journal of Computational Physics}, 295:\penalty0 715--739, 2015.

\bibitem[Maier and Kronbichler(2021)]{ryujin-2021-1}
M.~Maier and M.~Kronbichler.
\newblock Efficient parallel 3{D} computation of the compressible {E}uler equations with an invariant-domain preserving second-order finite-element scheme.
\newblock \emph{ACM Transactions on Parallel Computing}, 8\penalty0 (3):\penalty0 16:1--30, 2021.
\newblock \doi{10.1145/3470637}.
\newblock URL \url{https://arxiv.org/abs/2007.00094}.

\bibitem[Menikoff(2007)]{Menikoff2007}
R.~Menikoff.
\newblock \emph{Empirical Equations of State for Solids}, pages 143--188.
\newblock Springer Berlin Heidelberg, Berlin, Heidelberg, 2007.
\newblock ISBN 978-3-540-68408-4.
\newblock \doi{10.1007/978-3-540-68408-4_4}.
\newblock URL \url{https://doi.org/10.1007/978-3-540-68408-4_4}.

\bibitem[Menikoff and Plohr(1989)]{menikoff1989riemann}
R.~Menikoff and B.~J. Plohr.
\newblock The {R}iemann problem for fluid flow of real materials.
\newblock \emph{Reviews of modern physics}, 61\penalty0 (1):\penalty0 75, 1989.

\bibitem[Nessyahu and Tadmor(1990)]{nessyahu1990non}
H.~Nessyahu and E.~Tadmor.
\newblock Non-oscillatory central differencing for hyperbolic conservation laws.
\newblock \emph{Journal of computational physics}, 87\penalty0 (2):\penalty0 408--463, 1990.

\bibitem[Orlando and Bonaventura(2025)]{orlando2025asymptotic}
G.~Orlando and L.~Bonaventura.
\newblock Asymptotic-preserving {IMEX} schemes for the {E}uler equations of non-ideal gases.
\newblock \emph{Journal of Computational Physics}, 529:\penalty0 113889, 2025.

\bibitem[Planck(2013)]{planck2013treatise}
M.~Planck.
\newblock \emph{Treatise on thermodynamics}.
\newblock Courier Corporation, 2013.

\bibitem[Quartapelle et~al.(2003)Quartapelle, Castelletti, Guardone, and Quaranta]{quartapelle2003solution}
L.~Quartapelle, L.~Castelletti, A.~Guardone, and G.~Quaranta.
\newblock Solution of the {R}iemann problem of classical gasdynamics.
\newblock \emph{Journal of Computational Physics}, 190\penalty0 (1):\penalty0 118--140, 2003.

\bibitem[Sirianni et~al.(2024)Sirianni, Guardone, Re, and Abgrall]{sirianni2024explicit}
G.~Sirianni, A.~Guardone, B.~Re, and R.~Abgrall.
\newblock An explicit primitive conservative solver for the {E}uler equations with arbitrary equation of state.
\newblock \emph{Computers \& Fluids}, page 106340, 2024.

\bibitem[Tadmor(1986)]{tadmor1986minimum}
E.~Tadmor.
\newblock A minimum entropy principle in the gas dynamics equations.
\newblock \emph{Applied Numerical Mathematics}, 2\penalty0 (3-5):\penalty0 211--219, 1986.

\bibitem[Thompson(1971)]{thompson1971fundamental}
P.~A. Thompson.
\newblock A fundamental derivative in gasdynamics.
\newblock \emph{The Physics of Fluids}, 14\penalty0 (9):\penalty0 1843--1849, 1971.

\bibitem[Toro(2009)]{Toro_book}
E.~F. Toro.
\newblock \emph{{R}iemann Solvers and Numerical Methods for Fluid Dynamics: A Practical Introduction}.
\newblock Springer Berlin Heidelberg, 2009.

\bibitem[Toro et~al.(1994)Toro, Spruce, and Speares]{toro1994restoration}
E.~F. Toro, M.~Spruce, and W.~Speares.
\newblock Restoration of the contact surface in the {HLL}-{R}iemann solver.
\newblock \emph{Shock waves}, 4\penalty0 (1):\penalty0 25--34, 1994.

\bibitem[Velizhanin(2023)]{velizhanin2023notes}
K.~A. Velizhanin.
\newblock Notes on {D}avis {EOS}: Historical perspective, derivations \& physical considerations.
\newblock Technical report, Los Alamos National Laboratory (LANL), Los Alamos, NM (United States), 2023.

\bibitem[Wang and Li(2023)]{wang2023stiffened}
Y.~Wang and J.~Li.
\newblock Stiffened gas approximation and {GRP} resolution for compressible fluid flows of real materials.
\newblock \emph{Journal of Scientific Computing}, 95\penalty0 (1):\penalty0 22, 2023.

\bibitem[Woodward and Colella(1984)]{woodward1984numerical}
P.~Woodward and P.~Colella.
\newblock The numerical simulation of two-dimensional fluid flow with strong shocks.
\newblock \emph{Journal of computational physics}, 54\penalty0 (1):\penalty0 115--173, 1984.

\bibitem[Zhang and Shu(2012)]{zhang2012minimum}
X.~Zhang and C.-W. Shu.
\newblock A minimum entropy principle of high order schemes for gas dynamics equations.
\newblock \emph{Numerische Mathematik}, 121\penalty0 (3):\penalty0 545--563, 2012.

\end{thebibliography}

\end{document}